\newtheorem{theorem}{Theorem}[section]
\newtheorem{lemma}[theorem]{Lemma}
\newtheorem{proposition}[theorem]{Proposition}
\newtheorem{corollary}[theorem]{Corollary}
\newtheorem{remark}[theorem]{Remark}
\numberwithin{equation}{section}
\newcommand{\PD}{\partial}
\newcommand{\I}{\mathrm{i}}
\newcommand{\la}{\mathcal{L}}
\newcommand{\Rn}{\mathbb{R}^n}
\newcommand{\bp}{\begin{prob}}
	\newcommand{\ep}{\end{prob}}
\newcommand{\bpr}{\begin{proof}}
	\newcommand{\epr}{\end{proof}}
\newcommand{\vp}{\varphi}
\newcommand{\lb}{\left(}
\newcommand{\ve}{\varepsilon}
\newcommand{\rb}{\right)}
\renewcommand{\d}{\mathrm{d}}
\newcommand{\wt}{\widetilde}
\newcommand{\Lc}{\mathcal{L}}
\newcommand{\Rb}{\mathbb{R}}
\newcommand{\Sb}{\mathbb{S}}
\newcommand{\om}{\omega}
\newcommand{\D}{\mathrm{d}}
\newcommand{\lt}{L^2\lb0,T;L^2(\Rn)\rb}
\newcommand{\lth}{L^2\lb 0,T; H^{-1}_{\lambda}(\Rn)\rb}
\renewcommand{\O}{\Omega}
\theoremstyle{definition}
\title[Partial data inverse problem]{A partial data inverse problem for the Convection-diffusion equation}
\author[Sahoo and Vashisth]{Suman Kumar Sahoo$^{\dagger}$ and Manmohan Vashisth$^{\ddagger}$}
\address{$^{\dagger}$ TIFR Centre for Applicable Mathematics, Bangalore 560065, India. 
	\newline\indent E-mail:{\tt \ suman@tifrbng.res.in}}
\address{$^{\ddagger}$ Beijing Computational Science Research Center, Beijing 100193, China.
	\newline
	\indent E-mail:{\tt\  mvashisth@csrc.ac.cn, manmohanvashisth@gmail.com}}
\begin{document}
		\maketitle
\begin{abstract}
	In this article we study the inverse problem of determining the  convection term and the time-dependent density coefficient appearing in the convection-diffusion equation. We prove the unique determination of these coefficients from the knowledge of solution measured on a subset of the boundary.
\end{abstract}
	\ \ \ \ \  \textbf{Keywords:} Inverse problems, parabolic equation, Carleman estimates, partial boundary data.\\

\ \ \ \  \textbf{Mathematics subject classification 2010:
}  35R30, 35K20.

\section{Introduction}\label{Introduction}
Let $\Omega\subset\mathbb{R}^{n}$ with $n\geq 2$, be a bounded simply connected open set with $C^{2}$ boundary. For $T>0$, let $Q:=(0,T)\times\Omega$ and denote its lateral boundary by $\Sigma:=(0,T)\times\partial\Omega$. We consider the following initial boundary value problem
\begin{align}\label{definition of operator}
	\begin{aligned}
		\begin{cases}
			&\lb\partial_{t}-\sum_{j=1}^{n}\left(\partial_{j}+{A_{j}(t,x)}\right)^{2}+q(t,x)\rb u(t,x)=0,\ 
			(t,x)\in Q\\
			&u(0,x)=0,\ x\in\O\\
			&u(t,x)=f(t,x), \ (t,x)\in\Sigma.
		\end{cases}
	\end{aligned}
\end{align}
Throughout this article, we assume that $A_{j}\in W^{1,\infty}(Q)$ for $1\leq j\leq n$ and {$q\in L^{\infty}(Q)$}. Let us denote by 
\[{A(t,x):=\lb A_{1}(t,x),A_{2}(t,x),\cdots, A_{n}(t,x)\rb }\] and by 
\begin{align*}
	\Lc_{A,q}:=\partial_{t}-\sum_{j=1}^{n}\lb\partial_{j}+A_{j}(t,x)\rb^{2}+q(t,x).
\end{align*}
Before going to the main context of the article, let us briefly mention about the well-posedness of the forward problem. Following \cite{Caro_Kian_Convection_nonlinear}, define the spaces $\mathcal{K}_{0}$ and $\mathcal{H}_{T}$ by 
\begin{align*}
	\begin{aligned}
		&\mathcal{K}_{0}:= \left\{f|_{\Sigma}: \ f\in L^{2}\lb 0,T;H^{1}(\Omega)\rb\cap H^{1}\lb 0,T;H^{-1}(\Omega)\rb\  \mbox{and}\ f(0,x)=0,\ \mbox{for}\  x\in \Omega\right\} \\
		&\ \mbox{and}\ \mathcal{H}_{T}:=\left\{g|_{\Sigma}:\ g\in H^{1}\lb 0,T;H^{1}(\Omega)\rb\ \mbox{and}\ g(T,x)=0, \ \mbox{for}\ x\in\Omega\right\}.
	\end{aligned}
\end{align*}
As shown in \cite{Caro_Kian_Convection_nonlinear} (see also \cite{Lion-Magenes2}) that for $f\in \mathcal{K}_{0}$, Equation \eqref{definition of operator} admits a unique solution $u\in H^{1}\lb 0,T;H^{-1}(\Omega)\rb \cap L^{2}\lb 0,T;H^{1}(\Omega)\rb$ and  the operator $\mathcal{N}_{A,q}u$  given by 
\begin{align*}
	\langle \mathcal{N}_{A,q}u, w|_{\Sigma}\rangle :=\int\limits_{Q}\lb -u\PD_{t}\overline{w}+\nabla_{x}u\cdot\nabla_{x}\overline{w}+2u A\cdot\nabla_{x}\overline{w}+(\nabla_{x}\cdot A) u\overline{w}-\lvert A\rvert^{2}u\overline{w}+qu\overline{w}   \rb\D x\D t
\end{align*}
is well-defined for $w\in H^{1}(Q)$ such that $w(T,x)=0$, for $x\in\Omega$. Note that if $A,q$ and $f$ are smooth enough then $\mathcal{N}_{A,q}u$  is given by 
\begin{align*}
	\mathcal{N}_{A,q}u=\lb\PD_{\nu}u+2\lb\nu\cdot A\rb u \rb|_{\Sigma}
\end{align*}
where $\nu$ stands for the outward unit normal vector to $\PD\Omega$  and  $u$ solution to \eqref{definition of operator}. Motivated by this and \cite{Caro_Kian_Convection_nonlinear}, we define 
the Dirichlet to Neumann (DN) map $\Lambda_{A,q}:\mathcal{K}_{0}\rightarrow \mathcal{H}_{T}^{*}$ by 
\begin{align}\label{DN map}
	\Lambda_{A,q}(f):=\mathcal{N}_{A,q}u
\end{align}
where $\mathcal{H}_{T}^{*}$ denotes the dual of space $\mathcal{H}_{T}$  and  $u$ is solution to    \eqref{definition of operator} with Dirichlet boundary data equal to $f$. Then  from   (\cite{Caro_Kian_Convection_nonlinear}, see Section $2$), we have that DN map $\Lambda_{A,q}$ defined by \eqref{DN map} is continuous from  $\mathcal{K}_{0}$ to $\mathcal{H}_{T}^{*}$.

In the present article we first consider the problem of unique recovery of  coefficients $A(x)$ and $q(t,x)$ appearing in \eqref{definition of operator} from the information of DN map  $\Lambda_{A,q}$ measured on a subset of $\Sigma$.  It is well-known [see \cite{Sun_magnetic}] that  one cannot determine coefficient $A(x)$ uniquely from DN map $\Lambda_{A,q}$ measured on $\Sigma$ and this is because of the gauge invariance associated with $A(x)$. So one can only hope to recover $A(x)$ uniquely upto a potential term however the coefficient $q(t,x)$ can be determined uniquely (see Theorem \ref{Main Theorem} in \S \ref{Main result statement} for more details).  Later as a corollary of Theorem \ref{Main Theorem}, we consider the problem of determining time-dependent coefficients $A(t,x)$ and $q(t,x)$ appearing in \eqref{definition of operator} from the partial information of DN map $\Lambda_{A,q}$. Using some extra assumption on $A(t,x)$ and  Theorem \ref{Main Theorem}, we show that time-dependent coefficients $ A(t,x)$   and $q(t,x)$ can be determined uniquely from the knowledge of DN map $\Lambda_{A,q}$ measured on a part of $\Sigma$ (see Corollary \ref{Corollary} below in \S \ref{Main result statement} for more details).

The initial boundary value problem \eqref{definition of operator} is known as a convection-diffusion equation with constant diffusion. The coefficients $A$ and $q$ are called convection term and density coefficient respectively. The convection-diffusion equations appear in chemical engineering, heat transfer and probabilistic study of diffusion process etc.

Determination of the coefficients from boundary measurements appearing in parabolic partial differential equations have been studied by several authors. Isakov in \cite{Isakov_Completeness} considered the problem of determining time-independent coefficient $q$ for the case when $A=0$ in \eqref{definition of operator} from the DN map and he proved the uniqueness result by showing the density of product of solutions (inspired by the work of  \cite{Sylvester_Uhlmann_Calderon_problem_1987}) in some Lebesgue space.  Avdonin and Seidman in  \cite{Avdonin_Seidman_parabolic} studied the problem of determining time-independent density coefficient $q(x)$  appearing in  \eqref{definition of operator} by using the boundary control method pioneered by Belishev, Kurylev, Lassas and others see \cite{Avdonin and Belishev,Belishev_BC_Method_Recent_Progress,Katchalov_Kurylev_Lassas_Book_2001} and references therein.
In \cite{Choulli_Book} Choulli proved  the stability estimate analogous to the uniqueness problem considered in \cite{Isakov_Completeness}.  In \cite{Deng_Yu_Yang_First_order_parabolic_1995}  problem of determining the first order coefficients appearing in a parabolic equations in one dimension from the data measured at final time is studied.  Cheng and Yamamoto in \cite{Chen_Yamamoto_parabolic} proved the unique determination of convection term $A(x)$ (when $q=0$ in \eqref{definition of operator}) from a single boundary measurement in two dimension. 
Gaitan and Kian \cite{Gaitan_Kian_Cylinderical_Stability} using the global Carleman estimate used for hyperbolic equations [see \cite{Bukhgeuim_Klibanov_hyperbolic}] proved the stable determination of  time-dependent coefficient $q(t,x)$ in a bounded waveguide.  Choulli and Kian in \cite{Choulli_Kian_parabolic_partial} proved the stability estimate for determining time-dependent coefficient $q$ from the partial DN map. For more works related to parabolic inverse problems, we refer to \cite{Bellassoued_Kian_Soccorsi_parabolic_single_measurement,Choulli_Abstract_IP,Choulli_Abstract_IP_Applications,Choulli_Kian_parabolic_Parabolic,Choulli_Kian_Soccorsi_waveguide,Choulli_Book,Deng_Yu_Yang_First_order_parabolic_1995,Gaitan_Kian_Cylinderical_Stability,Isakov_semilinear,Isakov_Book,Nakamura_Sasayama_Parabolic} and the references therein. We also mention the work of  \cite{BP,Bellassoued_Ferreira_anisotropic_Schrodinger,Bellassoued_Kian_Soccorsi_Scrodinger_infinite,Ibtissem_Magnetic_Scrodinger_time,Eskin_electromagentic_time,Kian_Phan_Socorsi_Carleman_Infinite_Cylinder,Kian_Phan_Socorsi_Unbounded,Kian_Soccorsi_Holder_stability_Scrodinger} related to dynamical Schr\"odinger equation and the work of \cite{Ibtissem_wave_equation,Eskin_hyperbolic_time_1,Eskin_hyperbolic_time_2,Rakesh_Symes_Uniqueness_1988,Salazar,Stefanov_inverse_scattering} for hyperbolic inverse problems. We refer to  \cite{Cheng_Yamamoto_Global_Convection_2D_DN,Cheng_Yamamoto_Steady,Pohjola_steady_state_CD} for steady state convection-diffusion equation.
Recently Caro and Kian in \cite{Caro_Kian_Convection_nonlinear} established the unique determination of convection coefficient together with non-linearity term appearing in the equation from the knowledge of DN map measured on $\Sigma$.

Inspired by the work of \cite{Choulli_Kian_parabolic_partial}, we consider the problem of determining the full first order space derivative perturbation of heat operator from the partial DN map. We have proved our uniqueness result by using the geometric optics solutions constructed using a Carleman estimate in a Sobolev space of negative order and inverting the ray transform of a vector field which is known only in a very small neighbourhood of fixed direction {$\omega_0\in \Sb^{n-1}:=\left\{x\in\Rb^{n}: \lVert x\rVert=1\right\}$}. For elliptic and hyperbolic inverse problems these kind of techniques have been used by several authors. Related to our work, we refer to   \cite{Bukhgeim_Uhlmann_Calderon_problem_partial_Cauchy_data_2002,Ferriera_Kenig_Sjostrand_Uhlmann_magnetic} for the elliptic case and to \cite{Bellassoued_Jellali_Yamamoto_Lipschitz_stability_hyperbolic,Bellassoued_Jellali_Yamamoto_stability_hyperbolic,Hu_Kian_Wave_equation, Kian_ptential_uniqueness_wave,Kian_potential_stability_wave,Kian_damping,Kian-Oksanen,Krishnan_Vashisth_Relativistic} for the hyperbolic case.

The article is organized as follows. In \S  \ref{Main result statement} we give the statement of the main result. \S \ref{Boundary Carleman estimate} contains the boundary Carleman estimate. In \S \ref{Contruction of go solutions} we construct the geometric optics solutions using a Carleman estimate in a  Sobolev space of negative order. In \S \ref{Integral identity} we derive an integral identity and \S \ref{proof of the main theorem} contains the proof of main Theorem \ref{Main Theorem} and Corollary \ref{Corollary}.
\section{Statement of the main result}\label{Main result statement}
We begin this section by fixing some notation which will be used to state the main result of this article.   Following \cite{Bukhgeim_Uhlmann_Calderon_problem_partial_Cauchy_data_2002}  
fix an $\omega_{0}\in\mathbb{S}^{n-1}$ and define the $\omega_{0}$-shadowed and $\omega_{0}$-illuminated faces by 
\begin{align*}
	\partial\Omega_{+,\omega_{0}}:=\left\{x\in\partial\Omega:\ \nu(x)\cdot\omega_{0}\geq 0 \right\},\ \ \partial\Omega_{-,\omega_{0}}:=\left\{x\in\partial\Omega:\ \nu(x)\cdot\omega_{0}\leq 0 \right\}
\end{align*}
of $\partial\Omega$ where $\nu(x)$ is outward unit normal to $\partial\Omega$ at $x\in\partial\Omega$. Corresponding to $\partial\Omega_{\pm,\omega_{0}}$, we denote the lateral boundary parts by 
$\Sigma_{\pm,\omega_{0}}:=(0,T)\times\partial\Omega_{\pm,\omega_{0}}$. We denote by $F=(0,T)\times F'$ and $G=(0,T)\times G'$ where $F'$ and $G'$ are small enough open neighbourhoods of $\partial\Omega_{+,\omega_{0}}$ and  $\partial\Omega_{-,\omega_{0}}$ respectively in $\partial\Omega$. 

Since $\Omega$ is bounded and $T<\infty$, so we can choose a smallest $R>0$ such that $\overline{Q}\subset B(0,R)$ where $B(0,R)\subset\Rb^{1+n}$ is a ball of radius $R$ with center at origin.  Now we define admissible set $\mathcal{A}$ of vector fields $A(t,x)$ appearing in \eqref{definition of operator} by 
\begin{align}\label{Definition of admissible set}
	\mathcal{A}:=\Big\{A\in W^{1,\infty}(Q): \ \lVert A\rVert_{\infty}\leq \frac{1}{9R}\Big\}.
\end{align}
{We first prove the uniqueness result for time-independent convection coefficient  $A\in \mathcal{A}$ and time-dependent density  coefficient $q$.  More precisely we prove the following theorem:}

\begin{theorem}\label{Main Theorem}
	Let $\left({A}^{(1)},q_{1}\right)$ and $\left({A}^{(2)},q_{2}\right)$ be two sets of coefficients 
	such that  {$A^{(i)}\in\mathcal{A}$ are time-independent} and {$q_{i}\in L^{\infty}(Q)$} for $i=1,2$. Let $u_{i}$ be the solutions to \eqref{definition of operator} when $\lb A,q\rb=\lb A^{(i)},q_{i}\rb$ and  $\Lambda_{A^{(i)},q_{i}}$ for $i=1,2$ be the DN maps  defined by \eqref{DN map} corresponding to $u_{i}$. Now if
	\begin{align}\label{Equality of DN map}
		\Lambda_{A^{(1)},q_{1}}(f)|_{G}=\Lambda_{A^{(2)},q_{2}}(f)|_{G},\ \mbox{for}\  f\in L^{2}\lb 0,T;H^{1/2}(\PD\Omega)\rb
	\end{align}
	then
	there exists a function $\Phi\in W^{2,\infty}_{0}(\Omega)$ such that 
	
	\[{A}^{(1)}(x)-{A}^{(2)}(x)=\nabla_{x}\Phi(x), \ \ x\in\Omega\] and
	\[q_{1}(t,x)=q_{2}(t,x),\ (t,x)\in Q\]
	provided $A^{(1)}(x)=A^{(2)}(x)$ for $x\in \PD\Omega$.
\end{theorem}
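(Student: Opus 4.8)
The plan is to follow the standard programme for this class of problems: convert the equality of the partial Dirichlet-to-Neumann maps into an interior integral identity, probe that identity with geometric optics (GO) solutions, and read off the coefficients in the semiclassical limit $h\to0$. \textbf{(Step 1: Integral identity.)} Set $A:=A^{(1)}-A^{(2)}$ and $q:=q_{1}-q_{2}$. I would let $u_{1}$ solve $\Lc_{A^{(1)},q_{1}}u_{1}=0$ with data $f$ and pair the hypothesis \eqref{Equality of DN map} with a solution $w$ of the backward adjoint problem for $\Lc_{A^{(2)},q_{2}}$ satisfying $w(T,x)=0$. Inserting the weak definition of $\mathcal N_{A,q}$ and using that the two maps agree on $G$ produces an identity of the schematic form
\begin{align*}
\int_{Q}\Big(2A\cdot\nabla_{x}u_{1}\,\overline{w}+(\nabla_{x}\cdot A)\,u_{1}\overline{w}-\big(|A^{(1)}|^{2}-|A^{(2)}|^{2}\big)u_{1}\overline{w}+q\,u_{1}\overline{w}\Big)\,\D x\,\D t=0 .
\end{align*}
The uncontrolled boundary contribution over $\Sigma\setminus G$ is exactly what the boundary Carleman estimate of \S\ref{Boundary Carleman estimate} is designed to absorb, so that only the interior term above survives.

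\textbf{(Step 2: GO solutions and recovery of $A$.)} Using the negative-order Carleman estimate of \S\ref{Contruction of go solutions}, I construct, for a linear limiting Carleman weight $\varphi+i\psi$ adapted to $\om_{0}$ (so that $\nabla\varphi$ and $\nabla\psi$ are orthogonal of equal length), solutions
\begin{align*}
u_{1}=e^{\frac{1}{h}(\varphi+i\psi)}(a_{1}+r_{1}),\qquad w=e^{-\frac{1}{h}(\varphi+i\psi)}(a_{2}+r_{2}),
\end{align*}
where the amplitudes $a_{i}$ solve the first-order transport equations carrying the terms $A^{(i)}$ and the remainders obey $\|r_{i}\|\to0$ as $h\to0$; the smallness bound $\|A\|_{\infty}\le\frac{1}{9R}$ from \eqref{Definition of admissible set} is what makes the transport problems solvable with the required remainder decay. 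The phases cancel in the product, $u_{1}\overline{w}=a_{1}\overline{a_{2}}+o(1)$, while $\nabla_{x}u_{1}$ contributes a factor $h^{-1}(\nabla\varphi+i\nabla\psi)$. Multiplying the identity of Step 1 by $h$ and letting $h\to0$ therefore isolates the leading term
\begin{align*}
\int_{Q}A\cdot(\nabla\varphi+i\nabla\psi)\,a_{1}\overline{a_{2}}\,\D x\,\D t=0 .
\end{align*}
Since $A$ is time-independent, integrating in $t$ and varying the transverse frequency hidden in $a_{1}\overline{a_{2}}$ shows that the ray transform of $A$ (extended by $0$ outside $\Omega$) vanishes along every line whose direction lies in a small neighbourhood of $\om_{0}$.

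\textbf{(Step 3: Gauge and recovery of $q$.)} Because $A$ is compactly supported its Fourier transform is entire, so the vanishing of its ray transform over an open cone of directions around $\om_{0}$ continues analytically to the conclusion that the solenoidal part of $A$ vanishes; equivalently $A^{(1)}-A^{(2)}=\nabla_{x}\Phi$ for a scalar $\Phi$. The boundary matching $A^{(1)}=A^{(2)}$ on $\PD\Omega$ together with $\operatorname{supp}A\subset\overline\Omega$ pins $\Phi$ to be constant near $\PD\Omega$, and after normalization $\Phi\in W^{2,\infty}_{0}(\Omega)$. I then invoke the gauge identity $e^{-\Phi}\Lc_{B,p}e^{\Phi}=\Lc_{B+\nabla_{x}\Phi,\,p}$ (valid for any $B,p$ and any time-independent $\Phi$) to reduce to the case $A^{(1)}=A^{(2)}$, in which the convection terms of Step 1 cancel and the identity collapses to $\int_{Q}q\,u_{1}\overline{w}\,\D x\,\D t=0$. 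Passing to the limit exactly as in Step 2 gives $\int_{Q}q\,a_{1}\overline{a_{2}}\,\D x\,\D t=0$, and since the amplitudes may carry an arbitrary transverse frequency together with a time profile, the family $a_{1}\overline{a_{2}}$ is rich enough to force $q_{1}=q_{2}$ on $Q$.

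\textbf{(Main obstacle.)} The crux is the inversion in Step 3: with data only on $G$ the ray transform of $A$ is known solely for directions near the fixed $\om_{0}$, so no direct Fourier inversion is available and one must instead exploit Paley--Wiener analyticity to propagate the vanishing from the accessible cone to all directions. Tightly coupled to this is the compatibility, throughout Steps 1--2, of the boundary Carleman estimate and the negative-order remainder bounds with the genuinely parabolic structure and with the time dependence of $q$; keeping the boundary terms on $\Sigma\setminus G$ dominated uniformly in $h$ while retaining enough freedom in the amplitudes to recover a time-dependent coefficient is where the real work lies.
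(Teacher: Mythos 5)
Your proposal follows the same route as the paper: an integral identity derived from the equality of the partial DN maps, a boundary Carleman estimate that controls the unobserved contribution on $\Sigma\setminus G$ after division by the large parameter, geometric optics solutions whose amplitudes solve transport equations, inversion of a ray transform known only for directions near $\omega_{0}$ via Paley--Wiener analyticity of the Fourier transform of the curl, and a gauge reduction to recover $q$. Two points in your sketch, however, paper over content that is actually needed. First, the smallness condition $\lVert A\rVert_{\infty}\le\frac{1}{9R}$ is not what makes the transport problems solvable --- those are solved explicitly by the integrating factor $\exp\lb\pm\int_{0}^{\infty}\omega\cdot A(x+s\omega)\,\D s\rb$; it is used in the boundary Carleman estimate of \S\ref{Boundary Carleman estimate} to absorb the first-order perturbation $-2A\cdot\nabla_{x}-2\lambda\omega\cdot A$ into the positive terms generated by the principal part. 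Second, precisely because the amplitudes carry those integrating factors, the limit $\lambda\to\infty$ does not directly yield the ray transform of $A$: it yields $\int_{\Rb^{n}}\omega\cdot A(x)\,e^{-i\xi\cdot x}\exp\lb-\int_{0}^{\infty}\omega\cdot A(x+s\omega)\,\D s\rb\D x=0$, a nonlinear functional of the line integrals. One must split $\Rb^{n}=\Rb\omega\oplus\omega^{\perp}$ and recognize the integrand as a total derivative along $\omega$, so that the longitudinal integral telescopes to $1-\exp\lb-\int_{\Rb}\omega\cdot A(k+s\omega)\,\D s\rb$; only then does Fourier inversion on $\omega^{\perp}$ give the vanishing of the linear ray transform. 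Neither point changes the architecture, but both are required for Steps 2--3 to go through; the remaining compression in your Step 3 (showing that $\widehat{\partial_{j}A_{i}-\partial_{i}A_{j}}$ vanishes on an open set of frequencies before analytic continuation, which needs a genuine argument varying $\omega$ within $\xi^{\perp}$ near $\omega_{0}$) is exactly the paper's Lemma \ref{Ray transform lemma}, and you correctly identify it as the crux.
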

In Theorem \ref{Main Theorem} if we {take} some extra  assumption on convection term $A^{(i)}$ then we can prove the uniqueness {result} for {full recovery of} $A^{(i)}$ even for the case when  {$A^{(i)}\in\mathcal{A}$ for $i=1,2$} are time-dependent. The precise statement of this is given in the following Corollary.
\begin{corollary}\label{Corollary}
	Let $\left({A}^{(1)},q_{1}\right)$ and $\left({A}^{(2)},q_{2}\right)$ be two sets of time-dependent  coefficients 
	such that  $A^{(i)}\in\mathcal{A}$ and $q_{i}\in L^{\infty}(Q)$ for $i=1,2$. Let $u_{i}$ be the solutions to \eqref{definition of operator} when $\lb A,q\rb=\lb A^{(i)},q_{i}\rb$ and  $\Lambda_{A^{(i)},q_{i}}$ for $i=1,2$ be the DN maps  defined by \eqref{DN map} corresponding to $u_{i}$. Now if 	\begin{align}\label{Divergence free}
		\nabla_{x}\cdot A^{(1)}(t,x)=\nabla_{x}\cdot A^{(2)}(t,x),\ (t,x)\in Q
	\end{align} 
	and 
	\[\Lambda_{A^{(1)},q_{1}}(f)|_{G}=\Lambda_{A^{(2)},q_{2}}(f)|_{G},\ f\in L^{2}\lb 0,T;H^{1/2}(\PD\Omega)\rb\]
	then we have 
	\[A^{(1)}(t,x)=A^{(2)}(t,x) \ \mbox{and}\ q_{1}(t,x)=q_{2}(t,x),\ (t,x)\in Q\]
	provided $A^{(1)}(t,x)=A^{(2)}(t,x)$ for $(t,x)\in \Sigma$.
\end{corollary}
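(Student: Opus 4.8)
The plan is to run the entire machinery behind Theorem~\ref{Main Theorem} once more, but this time keeping the convection terms time-dependent, and to observe that the only place where time-independence of $A$ is genuinely used is the final gauge-elimination step. First I would revisit the boundary Carleman estimate of \S\ref{Boundary Carleman estimate} and the geometric optics solutions of \S\ref{Contruction of go solutions}: neither the weight (which is linear in the space variable and attached to the fixed direction $\omega_{0}$) nor the amplitude transport equations see whether $A^{(i)}$ depends on $t$, so the construction and the integral identity of \S\ref{Integral identity} carry over verbatim to the present setting. Writing $\wt{A}:=A^{(1)}-A^{(2)}$, the leading-order term of that identity as the large parameter tends to infinity is the one containing $\wt{A}\cdot\nabla_{x}u_{1}$, and after the same inversion of the ray transform known only near $\omega_{0}$ (following \cite{Bukhgeim_Uhlmann_Calderon_problem_partial_Cauchy_data_2002}) I expect to obtain, for a.e.\ $t\in(0,T)$, that $\wt{A}(t,\cdot)$ lies in the kernel of the spatial ray transform, i.e.
\[
\wt{A}(t,x)=\nabla_{x}\Phi(t,x),\qquad (t,x)\in Q,
\]
for some $\Phi(t,\cdot)\in W^{2,\infty}_{0}(\Omega)$, the vanishing boundary trace coming from the hypothesis $A^{(1)}=A^{(2)}$ on $\Sigma$.

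The new difficulty, compared with Theorem~\ref{Main Theorem}, is that for time-dependent $\Phi$ the magnetic gauge transformation $e^{\Phi}$ no longer commutes with $\partial_{t}$: it produces an extra zeroth-order term $\partial_{t}\Phi$, so the density $q$ and the gauge $\Phi$ become coupled and one cannot read off $q_{1}=q_{2}$ directly as in the time-independent case. This is exactly where the two extra hypotheses enter, and they should be used \emph{before} attempting to recover $q$. From the divergence-free assumption \eqref{Divergence free} one gets $\Delta_{x}\Phi(t,\cdot)=\nabla_{x}\cdot\wt{A}(t,\cdot)=0$ for a.e.\ $t$, so $\Phi(t,\cdot)$ is harmonic in $\Omega$; since $\Phi(t,\cdot)\in W^{2,\infty}_{0}(\Omega)$ has zero Dirichlet trace, uniqueness for the Dirichlet problem for $\Delta_{x}$ forces $\Phi(t,\cdot)\equiv 0$, and hence $\wt{A}\equiv 0$, that is $A^{(1)}=A^{(2)}$ on all of $Q$.

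With the convection terms now identified, the two operators $\Lc_{A^{(i)},q_{i}}$ differ only through the densities, and the integral identity collapses to $\int_{Q}(q_{1}-q_{2})\,u_{1}\overline{v}\,\D x\,\D t=0$ for the same pairs of geometric optics solutions. This is precisely the potential-recovery situation of Theorem~\ref{Main Theorem} with a common convection term, so the same density argument yields $q_{1}(t,x)=q_{2}(t,x)$ on $Q$, completing the proof.

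I expect the main obstacle to be essentially bookkeeping: checking that the Carleman estimate, the remainder bounds for the geometric optics solutions, and the partial-data ray-transform inversion are all insensitive to the $t$-dependence of $A^{(i)}$, so that the gauge relation $\wt{A}=\nabla_{x}\Phi$ really does hold with a $t$-dependent $\Phi$. Once that is in place the gauge elimination is an elementary harmonic-function uniqueness argument, and the divergence-free condition is seen to be doing exactly one job, namely decoupling $\Phi$ from $q$ so that the clean separation lost in the time-dependent setting is restored.
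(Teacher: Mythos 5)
Your proposal is correct and follows essentially the same route as the paper: the ray-transform identity and its inversion (Lemma \ref{Ray transform lemma}) are already established for time-dependent $A$, giving $A^{(1)}-A^{(2)}=\nabla_{x}\Phi(t,\cdot)$ for each $t$, after which the divergence-free hypothesis makes $\Phi(t,\cdot)$ harmonic with zero boundary trace, hence zero, and the density identification of $q_{1}=q_{2}$ then proceeds exactly as in Theorem \ref{Main Theorem}. The only slip is cosmetic: the integral identity involves $A\cdot\nabla_{x}u_{2}$ rather than $\wt{A}\cdot\nabla_{x}u_{1}$, which does not affect the argument.
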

\begin{remark}
	The additional assumption \eqref{Divergence free} on convection term $A^{(i)}$ in Corollary \ref{Corollary} have been considered in prior works as well. See for example  \cite{Bellassoued_Kian_Soccorsi_Scrodinger_infinite,Christofol_Soccorsi_magnetic} for the determination of vector field term appearing in the dynamical Schr\"odinger equation and also in \cite{Caro_Kian_Convection_nonlinear} for non-linear parabolic equation. 
\end{remark}

\section{Boundary Carleman estimate}\label{Boundary Carleman estimate}
In this section  we prove a Carleman estimate involving the boundary terms  for the operator $ \la_{A,q} $. 
We  will use this estimate to control  the boundary terms appearing in integral identity given by \eqref{Integral identity before using GO} where no information is given.
\begin{theorem}\label{boundary}
	Let $\varphi(t,x)=\lambda^{2}t+\lambda \omega\cdot x$ where $\omega\in \Sb^{n-1}$ is fixed. Let 
	$ u\in C^2(\overline{Q}) $ such that \[u(0,x)=0, \ \mbox{for} \ x\in\Omega \ \mbox{and} \  u(t,x)=0,\ \mbox{for} \ (t,x)\in \Sigma.\]
	If $A\in\mathcal{A}$ and {$q\in L^{\infty}(Q)$} then there exists $ C >0$ depending only on $ \Omega,T,q$ and $ A $  such that 
	\begin{equation}
		\begin{aligned}\label{bdryesti}
			&\lambda^2\int_{Q} e^{-2\varphi} \lvert u(t,x)\rvert^2 \D x\D t+\int_{Q} e^{-2\varphi} \lvert \nabla_{x}u(t,x)\rvert^2 \d x\d t +\int_{\Omega}  e^{-2\varphi(T,x)}\lvert u(T,x)\rvert^2 \d x \\
			&\ \ + \lambda \int_{\Sigma_{+,\omega}}  e^{-2\varphi} \lvert\PD_{\nu} u(t,x)\rvert^2\lvert \omega\cdot\nu(x)\rvert \d S_{x}  \d t \leq	 C\int\limits_{Q} e^{-2\varphi}\lvert\Lc_{A,q}u(t,x)\rvert^2 \D x \D t\\
			&\ \ \ \ \  \ \  \ \quad\quad \quad \quad  +C\lambda \int\limits_{\Sigma_{-,\omega}} e^{-2\varphi}\lvert\PD_{\nu} u(t,x)\rvert^2 \ \lvert\omega\cdot\nu(x)\rvert  \d S_{x}  \d t
		\end{aligned}
	\end{equation}
	holds for $ \lambda$ large.
\begin{proof}
	Let 
	\begin{align}\label{Definition of conjugated operator}
		\la_{\varphi} :=e^{-\varphi} \la_{A,q} e^{\varphi} 
	\end{align}
	and denote by 
	\begin{align*}
		\widetilde{q}(t,x):=q(t,x)-\nabla_{x}\cdot {A(t,x)-\lvert A(t,x)\rvert^{2}}.
	\end{align*}
	Then
	\begin{align}\label{Expression for Lphi}
		\begin{aligned}
			\la_{\varphi}v(t,x)&=e^{-\varphi} \lb \PD_{t} -\Delta-2{A(t,x)}\cdot\nabla_{x}+\widetilde{q}(t,x)\rb(e^{\varphi}v(t,x)) \\
			&= \lb\PD_{t}- \Delta  -2\nabla_{x} \varphi\cdot\nabla_{x}\rb v(t,x) +\lb \PD_{t}\varphi -\lvert \nabla _{x}\varphi\rvert^2 -\Delta \varphi\rb v(t,x)\\
			&\ \ \ \ -2\lb {A(t,x)}\cdot\nabla_{x}-2{A(t,x)}\cdot\nabla_{x}\phi\rb v(t,x):= \lb P_1 v + P_2 v+P_{3}v\rb(t,x)
		\end{aligned}
	\end{align}
	where
	\begin{align}\label{Definition of Pi}
		\begin{aligned}
			&P_{1} := -\Delta +\PD_{t}\varphi -\lvert \nabla \varphi\rvert^2 -\Delta \varphi=-\Delta \\ 
			&P_2 := \PD_{t}-2\nabla_{x}\varphi\cdot\nabla_{x}=\PD_{t}- 2  \lambda\omega\cdot\nabla_{x}\\ 
			&P_3:= -2{A(t,x)}\cdot\nabla_{x}-2{A(t,x)}\cdot\nabla_{x}\varphi+\widetilde{q}(t,x) =-2{A(t,x)}\cdot\nabla_{x}-2\lambda\omega\cdot {A(t,x)}+\widetilde{q}(t,x).
		\end{aligned}
	\end{align}
	Now let 
	\begin{align}\label{Definition of I L2 norm of conjugated operator}
		\begin{aligned}
			I&:=\int\limits_{Q}\lvert \la_{\varphi} v(t,x)\rvert^{2}\D x\D t\geq \frac{1}{2}\int\limits_{Q}\lvert \lb P_{1}+P_{2}\rb v(t,x)\rvert^{2}\D x\D t-\int\limits_{Q}\lvert P_3 v(t,x)\rvert^{2}\D x\D t:=I_{1}-I_{2}
		\end{aligned}
	\end{align}
	where 
	\begin{align*}
		\begin{aligned}
			I_{1}:=\frac{1}{2}\int\limits_{Q}\lvert \lb P_{1}+P_{2}\rb v(t,x)\rvert^{2}\D x\D t \ \mbox{and}\ 
			I_{2}:= \int\limits_{Q}\lvert P_3 v(t,x)\rvert^{2}\D x\D t.
		\end{aligned}
	\end{align*}
	Next we estimate each of $I_{j}$ for $j=1,2$.  Now $I_{1}$ is 
	\begin{align*}
		I_{1}= \frac{1}{2}\int\limits_{Q} \lvert P_1v(t,x)\rvert^2 + \frac{1}{2}\int_{Q} \lvert P_2v(t,x)\rvert^2 \D x \D t
		+\int\limits_{Q} P_1v(t,x) \ P_2 v(t,x)\D x\D t.
	\end{align*}
	We consider each term separately on right hand side of the above equation.
	Using integration by parts and the fact that $v|_{\Sigma} =0  $, we have
	\begin{align*}
		\int\limits_{Q} \lvert \nabla_{x}v (t,x)\rvert^2 \d x \d t&=-\int\limits_{Q} v (t,x)\Delta v(t,x)
		\d x\d t\leq \frac{1}{2s}\int\limits_{Q} \lvert \Delta v(t,x)\rvert^2 \D x\D t+ \frac{s}{2}\int_{Q}\lvert v(t,x)\rvert^2\D x\D t
	\end{align*}
	holds for any $s>0$.
	Thus we have 
	\begin{align}\label{estimap1}
		\int\limits_{Q} \lvert P_1 v (t,x)\rvert^2 \D x\D t\geq 2s \int\limits_{Q}  \lvert \nabla v (t,x)\rvert^2 \D x\D t- s^2	\int\limits_{Q} \lvert v(t,x)\rvert^2\D x\D t.
	\end{align}
	Following the proof of \cite[Lemma $ 3.1 $]{Choulli_Kian_parabolic_partial} we have 
	\begin{align}\label{estimep2}
		\begin{aligned}
			\int\limits_{Q}\lvert P_{2}v(t,x)\rvert^{2}\D x\D t\geq \frac{1+4\lambda^{2}}{16 R^{2}}\int\limits_{Q}\lvert v(t,x)\rvert^{2}\D x\D t
		\end{aligned}
	\end{align}
	where $R>0$ is the radius of smallest ball $B(0,R)\subset \Rb^{1+n}$ such that $\overline{Q}\subseteq B(0,R)$. Now consider 
	\begin{align}\label{estimap1p2}
		\begin{aligned}
			2 \int_{Q} P_1v(t,x) \ P_2v(t,x)\d x \d t  &= -2\int\limits_{Q} \Delta v(t,x) \PD_{t}v(t,x)\d x \d t + 2\lambda \int\limits_{Q}\Delta v(t,x)\omega\cdot\nabla_{x}v(t,x)\d x \d t\\
			&= \int\limits_{\Omega} \lvert \nabla v(T,x)\rvert^2 \d x +\lambda\int\limits_{\Sigma}\omega\cdot\nu(x)\lvert\PD_{\nu}v(t,x)\rvert^{2}\D S_{x}\D t.
		\end{aligned}
	\end{align}
	Combining Equations  \eqref{estimap1},\eqref{estimep2} and \eqref{estimap1p2} we get
	\begin{equation}\label{estimateforprinci}
		\begin{aligned}
			I_{1}      &\geq s \int\limits_{Q}  \lvert \nabla v (t,x)\rvert^2 \D x\D t- \frac{s^2}{2}	\int\limits_{Q} \lvert v(t,x)\rvert^2\D x\D t +\frac{1+4\lambda^{2}}{32 R^{2}}\int\limits_{Q}\lvert v(t,x)\rvert^{2}\D x\D t\\
			&\ \ \  +\frac{1}{2}\int\limits_{\Omega} \lvert\nabla v(T,x)\rvert^2 \d x +\frac{\lambda}{2}\int\limits_{\Sigma}\omega\cdot\nu(x)\lvert\PD_{\nu}v(t,x)\rvert^{2}\D S_{x}\D t.\\
		\end{aligned}
	\end{equation}
	Next  we estimate $I_2$.  
	\begin{align}\label{estimap3}
		\begin{aligned}
			I_{2}&\leq\int\limits_{Q} \Big\lvert \Big(-2{A(t,x)}\cdot\nabla_{x}-2\lambda\omega\cdot {A(t,x)}+\widetilde{q}(t,x)\Big)v(t,x)\Big\rvert^{2}\D x\D t\\
			& \	\leq 2\lVert \widetilde{q}\rVert^2_{\infty}\int_{Q}\lvert v(t,x)\rvert^2\d x \d t +8\lambda^2 \lVert A \rVert^2_{\infty} \int_{Q}\lvert v(t,x)\rvert^2 \d x \d t+ 8\lVert A \rVert^2_{\infty} \int\limits_{Q} \lvert\nabla v(t,x)\rvert^2 \d x\d t.
		\end{aligned}
	\end{align}
	Using \eqref{estimateforprinci} and \eqref{estimap3} in \eqref{Definition of I L2 norm of conjugated operator} we get 
	\begin{align*}
		\begin{aligned}
			I&\geq s \int\limits_{Q}  \lvert \nabla_{x} v (t,x)\rvert^2 \D x\D t- \frac{s^2}{2}	\int\limits_{Q} \lvert v(t,x)\rvert^2\D x\D t +\frac{1+4\lambda^{2}}{32 R^{2}}\int\limits_{Q}\lvert v(t,x)\rvert^{2}\D x\D t\\
			&\ \ \ +\frac{1}{2}\int\limits_{\Omega} \lvert\nabla_{x}v(T,x)\rvert^2 \d x  +\frac{\lambda}{2}\int\limits_{\Sigma}\omega\cdot\nu(x)\lvert\PD_{\nu}v(t,x)\rvert^{2}\D S_{x}\D t - 2\lVert \tilde{q}\rVert^2_{\infty}\int_{Q}\lvert v(t,x)\rvert^2\d x \d t\\
			& \ \ \  -8\lambda^2 \lVert A \rVert^2_{\infty} \int_{Q}\lvert v(t,x)\rvert^2 \d x \d t
			-8\lVert A \rVert^2_{\infty} \int\limits_{Q} \lvert \nabla_{x}v(t,x)\rvert^2 \d x\d t.\\
			&\geq \lb \frac{1+4\lambda^{2}}{32 R^{2}}- \frac{s^2}{2}-2\lVert \tilde{q}\rVert^2_{\infty}-8\lambda^2 \lVert A \rVert^2_{\infty}\rb\int\limits_{Q}\lvert v(t,x)\rvert^{2}\D x \D t+\lb s -8\lVert A \rVert^2_{\infty} \rb\int\limits_{Q}  \lvert \nabla_{x} v (t,x)\rvert^2 \D x\D t\\
			&\ \ \ \ +\frac{1}{2}\int\limits_{\Omega} \lvert\nabla_{x} v(T,x)\rvert^2 \d x +\frac{\lambda}{2}\int\limits_{\Sigma}\omega\cdot\nu(x)\lvert\PD_{\nu}v(t,x)\rvert^{2}\D S_{x}\D t.
		\end{aligned}
	\end{align*}
	Now since $\lVert A\rVert_{\infty}\leq \frac{1}{9R}$, therefore taking $\lambda$ large enough and using the Poincar\' e inequality, we have 
	\begin{align}\label{main estimate}
		\begin{aligned}
			\int\limits_{Q}\lvert \la_{\varphi} v(t,x)\rvert^{2}\D x\D t &\geq C \Bigg( \lambda^{2}\int\limits_{Q}\lvert v(t,x)\rvert^{2}\D x\D t+\int\limits_{Q}  \lvert \nabla_{x} v (t,x)\rvert^2 \D x\D t\\
			&\qquad +\frac{1}{2}\int\limits_{\Omega} \lvert v(T,x)\rvert^2 \d x + \lambda\int\limits_{\Sigma}\omega\cdot\nu(x)\lvert\PD_{\nu}v(t,x)\rvert^{2}\D S_{x}\D t \Bigg)
		\end{aligned}
	\end{align}
	holds for large  $\lambda$ and  $C>0$ depending only on $Q$, $A$ and $\widetilde{q}$.
	Now after substituting $ v(t,x) = e^{-\varphi(t,x)} u(t,x) $ in \eqref{main estimate}, we get 	
	\begin{align*}
		\begin{aligned}
			&\lambda^2\int_{Q} e^{-2\varphi} \lvert u(t,x)\rvert^2 \D x\D t+\int_{Q} e^{-2\varphi} \lvert\nabla_{x}u(t,x) \rvert^2 \d x\d t +\int_{\Omega}  e^{-2\varphi(T,x)}\lvert u(T,x)\rvert^2 \d x \\
			&\ \ + \lambda \int_{\Sigma_{+,\omega}}  e^{-2\varphi} \lvert\PD_{\nu} u(t,x)\rvert^2\lvert \omega\cdot\nu(x)\rvert \d S_{x}  \d t \leq	 C\int\limits_{Q} e^{-2\varphi}\lvert\Lc_{A,q}u(t,x)\rvert^2 \D x \D t\\
			&\ \ \ \ \  \ \  \ \quad\quad \quad \quad  +C\lambda \int\limits_{\Sigma_{-,\omega}} e^{-2\varphi}\lvert\PD_{\nu} u(t,x)\rvert^2 \ \lvert\omega\cdot\nu(x)\rvert  \d S_{x}  \d t.
		\end{aligned}
	\end{align*}
	This completes the proof of Carleman estimate given by \eqref{boundary}.
\end{proof}
\end{theorem}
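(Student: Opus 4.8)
The plan is to conjugate the parabolic operator by the Carleman weight and split the resulting operator into its formally self-adjoint and skew-adjoint parts. Concretely, I would set $v=e^{-\varphi}u$ and study $\mathcal{L}_{\varphi}=e^{-\varphi}\mathcal{L}_{A,q}e^{\varphi}$. Because $\varphi=\lambda^{2}t+\lambda\omega\cdot x$ with $|\omega|=1$, one has $\partial_{t}\varphi=\lambda^{2}$, $\nabla_{x}\varphi=\lambda\omega$, $|\nabla_{x}\varphi|^{2}=\lambda^{2}$ and $\Delta\varphi=0$, so the zeroth-order weight contribution $\partial_{t}\varphi-|\nabla_{x}\varphi|^{2}-\Delta\varphi$ vanishes; this is precisely the choice of weight that collapses the second-order part to $P_{1}=-\Delta$. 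I would then decompose $\mathcal{L}_{\varphi}v=(P_{1}+P_{2}+P_{3})v$, where $P_{2}=\partial_{t}-2\lambda\omega\cdot\nabla_{x}$ is the skew part and $P_{3}$ collects the convection terms $-2A\cdot\nabla_{x}-2\lambda\,\omega\cdot A$ together with the zeroth-order coefficient $\widetilde{q}=q-\nabla_{x}\cdot A-|A|^{2}$.

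The heart of the estimate is the lower bound for $\|(P_{1}+P_{2})v\|_{L^{2}(Q)}^{2}$. Using the elementary inequality $|\mathcal{L}_{\varphi}v|^{2}\ge\tfrac12|(P_{1}+P_{2})v|^{2}-|P_{3}v|^{2}$ and expanding the square, the boundary and final-time contributions all come from the cross term $2\int_{Q}P_{1}v\,P_{2}v$. I would integrate this by parts: the piece $-2\int_{Q}\Delta v\,\partial_{t}v$ produces, after using $v(0,\cdot)=0$ and $v|_{\Sigma}=0$ (so that $\partial_{t}v=0$ on $\Sigma$), the positive final-time term $\int_{\Omega}|\nabla v(T,x)|^{2}\,dx$, while the piece $2\lambda\int_{Q}\Delta v\,\omega\cdot\nabla_{x}v$ produces $\lambda\int_{\Sigma}(\omega\cdot\nu)|\partial_{\nu}v|^{2}\,dS_{x}\,dt$; here $v|_{\Sigma}=0$ forces the tangential gradient to vanish, so $\nabla v=(\partial_{\nu}v)\nu$ on $\Sigma$. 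The self-adjoint term $\int_{Q}|\Delta v|^{2}$ is bounded below by $2s\int_{Q}|\nabla v|^{2}-s^{2}\int_{Q}|v|^{2}$ for any $s>0$, and the skew term is handled by the Poincar\'e-type bound $\int_{Q}|P_{2}v|^{2}\ge\frac{1+4\lambda^{2}}{16R^{2}}\int_{Q}|v|^{2}$ of \cite[Lemma 3.1]{Choulli_Kian_parabolic_partial}, which exploits that $\overline{Q}\subset B(0,R)$.

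Finally I would absorb the lower-order operator $P_{3}$: a crude bound gives $\int_{Q}|P_{3}v|^{2}\le 8\lambda^{2}\|A\|_{\infty}^{2}\int_{Q}|v|^{2}+8\|A\|_{\infty}^{2}\int_{Q}|\nabla v|^{2}+2\|\widetilde{q}\|_{\infty}^{2}\int_{Q}|v|^{2}$. Collecting coefficients leaves a net factor $\frac{1+4\lambda^{2}}{32R^{2}}-\tfrac{s^{2}}{2}-2\|\widetilde{q}\|_{\infty}^{2}-8\lambda^{2}\|A\|_{\infty}^{2}$ in front of $\int_{Q}|v|^{2}$ and $s-8\|A\|_{\infty}^{2}$ in front of $\int_{Q}|\nabla v|^{2}$. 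The main obstacle, and the reason for the admissibility constraint $\|A\|_{\infty}\le\frac{1}{9R}$, is to keep both factors positive as $\lambda\to\infty$: the $\lambda^{2}$ gain from $P_{2}$ is $\frac{\lambda^{2}}{8R^{2}}$ whereas the loss from $P_{3}$ is at most $8\lambda^{2}\|A\|_{\infty}^{2}\le\frac{8\lambda^{2}}{81R^{2}}$, and since $\tfrac18>\tfrac{8}{81}$ the $v$-coefficient survives; fixing a constant $s>8\|A\|_{\infty}^{2}$ controls the gradient term, after which $-\tfrac{s^{2}}{2}$ and the $\widetilde{q}$-term are dominated by taking $\lambda$ large. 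This yields the clean estimate for $v$; substituting back $v=e^{-\varphi}u$ and transposing the part of the boundary integral over $\Sigma_{-,\omega}$, where $\omega\cdot\nu\le0$, to the right-hand side produces exactly \eqref{bdryesti}.
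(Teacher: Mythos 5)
Your proposal is correct and follows essentially the same route as the paper's own proof: the identical decomposition $\mathcal{L}_{\varphi}v=(P_1+P_2+P_3)v$ with $P_1=-\Delta$, the inequality $I\ge\tfrac12\|(P_1+P_2)v\|^2-\|P_3v\|^2$, integration by parts in the cross term to generate the final-time and $\lambda\int_{\Sigma}(\omega\cdot\nu)|\partial_{\nu}v|^2$ boundary contributions, the Poincar\'e-type lower bound for $P_2$ from Choulli--Kian, and the absorption of $P_3$ via $\|A\|_{\infty}\le\frac{1}{9R}$ (with the same bookkeeping $\tfrac18>\tfrac{8}{81}$). No substantive differences to report.
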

\section{Construction of geometric optics solutions}\label{Contruction of go solutions}
In this section, we  construct the exponentially growing solution to 
\[\Lc_{A,q}u(t,x)=0,\ (t,x)\in Q\]
and exponentially decaying solution to 
\[\la^{*}_{A,q}u(t,x)=0,\ (t,x)\in Q\]
where $\la^{*}_{A,q}$ given by
\[\la^{*}_{A,q}:=  -\PD_{t} - \sum_{j=1}^{n}\lb\PD_{j} -A_{j}(t,x)\rb^2 +{\widetilde{q}^{*}}(t,x)\]
is	a formal $L^{2}$ adjoint of the operator $\la_{A,q}$.
We construct these solutions by using a Carleman estimate in a Sobolev space of negative order as used in \cite{Ferriera_Kenig_Sjostrand_Uhlmann_magnetic} for elliptic case and in \cite{Kian_damping,Krishnan_Vashisth_Relativistic} for hyperbolic case. Before going further following \cite{Kian_damping} we will give some definition and notation, which will be used later. 	For $m\in \Rb$, define space $ L^2\lb0,T;H^m_{\lambda}(\Rn)\rb$  by
\begin{align*}
	L^2\lb0,T;H^m_{\lambda}(\Rn)\rb:= \left\{u(t,\cdot)\in \mathcal{S}^{'}(\Rn):\lb\lambda^2+|\xi|^2\rb^{m/2}\widehat{u}(t,\xi) \in L^2(\Rn) \right\}
\end{align*}
with the norm
\[ \lVert u \rVert^2_{L^2\lb0,T;H^m_{\lambda}(\Rn)\rb}:= \int\limits_{0}^{T}\int\limits_{\Rn} \lb \lambda^2+\lvert\xi\rvert^2\rb^{m} \lvert\widehat{u}(t,\xi)\rvert^2 d\xi dt\]
where $\mathcal{S}^{'}(\Rn) $ denote the space of all tempered distribution on $\Rn$ and $\widehat{u}(t,\xi)$ is the Fourier transform  with respect to space variable $x\in \Rb^{n}$.
We define by 
\[\langle D_{x},\lambda \rangle^{m} u = \mathcal{F}_{x}^{-1} \left\{ \lb\lambda^2+|\xi|^2\rb^{m/2} \mathcal{F}_{x}u  \right\}\]
here $\mathcal{F}_{x}$ and $\mathcal{F}_{x}^{-1}$ denote the Fourier transform and inverse Fourier transform respectively with respect to space variable $x\in\Rb^{n}$. With this we define the symbol class $S^{m}_{\lambda}(\Rn)$ of order $m$ by
\[S^{m}_{\lambda}(\Rn):=\left\{ C_{\lambda} \in C^{\infty}(\Rn \times \Rn) : \lvert \PD^{\alpha}_{x}\PD^{\beta}_{\xi} c_{\lambda}(x,\xi)\rvert\leq C_{\alpha,\beta} \lb\lambda^2+ \lvert\xi\rvert^2\rb^{m-\lvert\beta\rvert}, \ \mbox{for multi-indices } \alpha,\beta \in \mathbb{N}^n\right\}.\]
With these notations and definitions, we state the main theorem of this section.
\begin{theorem}\label{Construction of solution}
	\begin{enumerate}
		\item $($Exponentially growing solutions$)$ Let  $\Lc_{A,q}$ be as defined above. Then for $\lambda$ large there exists {$ v \in H^{1}\lb 0,T; H^{-1}(\Omega)\rb\cap L^{2}\lb0,T;H^{1}(\Omega)\rb $} a solution to 
		\begin{align*}
			\begin{aligned}
				\begin{cases}
					&\la_{A,q}  v(t,x)=0,\ (t,x)\in Q,\\
					&v(0,x)=0,\ x\in\Omega
				\end{cases}
			\end{aligned} 
		\end{align*}
		of the  following form
		\begin{align}\label{growingsolu}
			v_{g}(t,x)=  e^{\varphi}\Big(B_{g}(t,x) +R_{g}(t,x,\lambda)  \Big)
		\end{align}
		where for $\chi\in C_{c}^{\infty}((0,T))$ arbitrary, we have 
		\begin{align}\label{expreofbg}
			B_{g}(t,x)=\chi(t) e^{-\I (t\tau+x\cdot \xi)}\exp\bigg(\int_{0}^{\infty}\omega \cdot {A(t,x+s\omega)}\d s \bigg)
		\end{align}
		and $R_{g}(t,x,\lambda)$ satisfies the following
		\begin{align}\label{estimofrg}
			R_{g}(0,x,\lambda)=0,\ \mbox{for $x\in\Omega$ and}\ 	{\lVert R_{g}\rVert_{L^{2}\lb 0,T;H^1_{\lambda}(\Rn)\rb} \leq C}.
		\end{align}
		\item $($Exponentially decaying solutions$)$ Let $\la^{*}_{A,q}$ be as before. Then for $\lambda$ large there exists {$ v \in H^{1}\lb 0,T; H^{-1}(\Omega)\rb\cap L^{2}\lb0,T;H^{1}(\Omega)\rb $} a solution to 
		\begin{align*}
			\begin{aligned}
				\begin{cases}
					&\la^{*}_{A,q}  v(t,x)=0,\ (t,x)\in Q,\\
					&v(T,x)=0,\ x\in\Omega
				\end{cases}
			\end{aligned} 
		\end{align*}
		of  the following form
		\begin{align}\label{decayingsolu}
			v_{d}(t,x)=  e^{-\varphi}\Big( B_{d}(t,x) +R_{d}(t,x,\lambda)  \Big)
		\end{align}
		where for $\chi\in C_{c}^{\infty}((0,T))$ arbitrary, we have  
		\begin{align}\label{expreofbd}
			B_{d}(t,x)=\chi(t) \exp\bigg(-\int_{0}^{\infty}\omega \cdot {A(t,x+s\omega)}\d s \bigg)
		\end{align}
		and $R_{d}(t,x,\lambda)$ satisfies the following 
		\begin{align}\label{estimofrd}
			R_{d}(T,x,\lambda)=0,\ \mbox{for $x\in\Omega$ and}\ 	{\lVert R_{d}\rVert_{L^{2}\lb 0,T;H^{1}_{\lambda}(\Rn)\rb} \leq C.}
		\end{align}
	\end{enumerate}
\end{theorem}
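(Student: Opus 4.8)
The plan is to build both solutions by the complex geometric optics scheme: peel off an explicit leading amplitude that solves a transport equation, and then solve for a remainder that is negligible as $\lambda\to\infty$. I describe the growing solution in detail; the decaying one follows by running the same argument on the adjoint $\la^*_{A,q}$. Writing $\Lc_\varphi:=e^{-\varphi}\Lc_{A,q}e^{\varphi}$ and using that $\varphi=\lambda^2 t+\lambda\omega\cdot x$ satisfies $\PD_t\varphi-|\nabla_x\varphi|^2-\Delta\varphi=0$, the computation in \eqref{Expression for Lphi}--\eqref{Definition of Pi} gives $\Lc_\varphi=-\Delta+\PD_t-2\lambda\,\omega\cdot\nabla_x-2A\cdot\nabla_x-2\lambda(\omega\cdot A)+\tilde q$. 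Since $\Lc_{A,q}\big(e^{\varphi}(B_g+R_g)\big)=e^{\varphi}\Lc_\varphi(B_g+R_g)$, requiring $\la_{A,q}v_g=0$ is equivalent to $\Lc_\varphi R_g=-\Lc_\varphi B_g$, and I would choose $B_g$ so that the terms of order $\lambda$ in $\Lc_\varphi B_g$ cancel.

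Those terms are $-2\lambda\big(\omega\cdot\nabla_x B_g+(\omega\cdot A)B_g\big)$, so I impose the transport equation $\omega\cdot\nabla_x B_g+(\omega\cdot A)B_g=0$. After extending $A$ to a compactly supported field on $\Rn$, this is solved along the rays $s\mapsto x+s\omega$ by the amplitude \eqref{expreofbg}: choosing $\xi\perp\omega$ makes the oscillatory factor $\chi(t)e^{-\I(t\tau+x\cdot\xi)}$ invisible to $\omega\cdot\nabla_x$, while the fundamental theorem of calculus gives $\omega\cdot\nabla_x\exp\big(\int_0^\infty\omega\cdot A(t,x+s\omega)\,\d s\big)=-(\omega\cdot A)\exp(\cdots)$. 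With this choice the remaining source is $\Lc_\varphi B_g=(-\Delta+\PD_t-2A\cdot\nabla_x+\tilde q)B_g$, and the cutoff $\chi\in C_c^\infty((0,T))$ forces $B_g(0,\cdot)=0$.

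Because $A\in W^{1,\infty}(Q)$ only, the amplitude satisfies $\nabla_x B_g\in L^\infty(Q)$ but no better, so $\Delta B_g=\nabla_x\cdot(\nabla_x B_g)$ need not be an $L^2$ function; writing it as a divergence shows $\Lc_\varphi B_g$ is nonetheless bounded in $L^2\lb 0,T;H^{-1}_{\lambda}(\Rn)\rb$. This is what forces the whole remainder construction into the negative-order scale. The tool is a Carleman estimate in $L^2\lb 0,T;H^{-1}_{\lambda}(\Rn)\rb$: starting from the interior estimate contained in Theorem \ref{boundary} (drop the nonnegative boundary and endpoint terms of \eqref{bdryesti}, which yields $\|v\|_{L^2\lb 0,T;H^1_{\lambda}(\Rn)\rb}\le C\|\Lc_\varphi v\|_{\lt}$), I would conjugate by $\langle D_x,\lambda\rangle^{-1}$ and commute this multiplier through $\Lc_\varphi$. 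Applying the analogous estimate for the adjoint operator together with a Hahn--Banach/Riesz argument then produces, for every $F\in L^2\lb 0,T;H^{-1}_{\lambda}(\Rn)\rb$, a solution of $\Lc_\varphi R_g=F$ with $\|R_g\|_{L^2\lb 0,T;H^1_{\lambda}(\Rn)\rb}\le C\|F\|_{L^2\lb 0,T;H^{-1}_{\lambda}(\Rn)\rb}$ and $R_g(0,\cdot)=0$; taking $F=-\Lc_\varphi B_g$ gives \eqref{growingsolu}--\eqref{estimofrg}.

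The \emph{main obstacle} is exactly this passage to negative order. The principal part $-\Delta+\PD_t-2\lambda\,\omega\cdot\nabla_x$ has constant coefficients in $x$ and commutes with $\langle D_x,\lambda\rangle^{-1}$, but the first-order term $2A\cdot\nabla_x$ and the zeroth-order terms do not, and the resulting commutators must be controlled with the $\lambda$-dependent pseudodifferential calculus built on the symbol classes $S^{m}_{\lambda}(\Rn)$. The low regularity $A\in W^{1,\infty}(Q)$, $q\in L^\infty(Q)$ makes these commutator and mapping estimates delicate; the gain of one power of $\langle D_x,\lambda\rangle$ is precisely what lets the first-order perturbation be absorbed once $\lambda$ is large. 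The decaying solution $v_d=e^{-\varphi}(B_d+R_d)$ is constructed identically from $\la^*_{A,q}$: the transport equation for $B_d$ carries the opposite sign, producing \eqref{expreofbd}, and the terminal condition $R_d(T,\cdot)=0$ replaces $R_g(0,\cdot)=0$.
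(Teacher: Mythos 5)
Your construction follows the same route as the paper: the same conjugation $\la_{\varphi}=e^{-\varphi}\la_{A,q}e^{\varphi}$, the same transport equation $\omega\cdot\nabla_{x}B_{g}+(\omega\cdot A)B_{g}=0$ solved by the amplitude \eqref{expreofbg} with $\xi\perp\omega$, and the same strategy for the remainder, namely a Carleman estimate shifted to $L^{2}\lb 0,T;H^{-1}_{\lambda}(\Rn)\rb$ by the $\langle D_{x},\lambda\rangle$-calculus followed by a Hahn--Banach/Riesz duality argument (this is exactly Propositions \ref{Carleman estimate of negative order} and \ref{exisofsolu}).

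There is, however, one genuine gap in your version: the claimed solvability bound $\lVert R_{g}\rVert_{L^{2}\lb 0,T;H^{1}_{\lambda}(\Rn)\rb}\leq C\lVert F\rVert_{\lth}$ for $F\in\lth$ is a gain of \emph{two} derivatives in the $\lambda$-scale, and the Carleman estimates you invoke only gain one. Concretely, the duality scheme produces a solution in the dual of the space in which the adjoint estimate is taken: using $\lVert z\rVert_{\lt}\leq C\lVert\la^{*}_{\varphi}z\rVert_{\lth}$ one must bound $\lvert\int zF\rvert$ by $\lVert z\rVert_{\lt}\lVert F\rVert_{L^{2}(Q)}$, so the source must be in $L^{2}(Q)$ and the output lands in $H^{1}_{\lambda}$; if $F$ is only in $\lth$ one is forced to pair against $\lVert z\rVert_{L^{2}(0,T;H^{1}_{\lambda})}\leq C\lVert\la^{*}_{\varphi}z\rVert_{\lt}$ and the Riesz representation then yields only $\lVert R_{g}\rVert_{\lt}\leq C\lVert F\rVert_{\lth}$. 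The weaker $L^{2}$ bound is not enough for the theorem as stated, nor for its use downstream (Lemma \ref{Estimate for remainder lemma} applies the trace theorem to $R_{d}$, which needs spatial $H^{1}$ control). The paper sidesteps this by asserting that the source $\la_{\varphi}B_{g}$ lies in $L^{2}(Q)$ and applying Proposition \ref{exisofsolu}; your observation that $\Delta B_{g}$ is problematic for $A$ merely in $W^{1,\infty}$ is a fair criticism of that step (one would mollify $A$ or demand more regularity to justify it), but declaring the source to be only in $\lth$ and then invoking a two-derivative-gain estimate does not close the issue --- it replaces one unjustified step with another.
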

Proof of the above theorem is based on a Carleman estimate in a Sobolev space of negative order. To prove the Carleman estimate stated in Proposition \ref{Carleman estimate of negative order}, we follow the  arguments similar to  one used in \cite{Ferriera_Kenig_Sjostrand_Uhlmann_magnetic,Kian_damping,Krishnan_Vashisth_Relativistic} for elliptic and hyperbolic inverse problems. 

\begin{proposition}\label{Carleman estimate of negative order}
	Let  $ \varphi, A$ and $q$ be as in Theorem \ref{boundary}. Then for $\lambda$ large enough, we have 
	\begin{enumerate}
		\item $($Interior Carleman  estimate for $\la^{*}_{\varphi}$ $)$  Let $\Lc_{\varphi}^{*}:=e^{\varphi}\Lc_{A,q}^{*}e^{-\varphi}$, then there exists a constant  $C>0$ independent of $\lambda$ and $v$ such that
		\begin{align}\label{intericarleadj}
			\lVert v \rVert_{\lt}\leq  C \lVert \la^{*}_{\varphi} v\rVert_{\lth}, 
		\end{align}
		holds for $v\in C^{1}\lb [0,T];C_{c}^{\infty}(\Omega)\rb$ satisfying $v(T,x)=0$ for $x\in \Omega$.
		\item $($Interior Carleman estimate for $\la_{\varphi}$ $)$ Let $\Lc_{\varphi}$ be as before then there exists a constant  $C>0$ independent of $\lambda$ and $v$ such that
		\begin{align}\label{intericarle}
			\lVert v \rVert_{\lt}\leq  C \lVert\la_{\varphi} v\rVert_{\lth}
		\end{align}
		holds for  $v\in C^{1}\lb [0,T];C_{c}^{\infty}(\Omega)\rb$ satisfying $v(0,x)=0$ for $x\in \Omega$. 
	\end{enumerate}
	\begin{proof}
		\begin{enumerate}
			\item $($Proof for \eqref{intericarleadj}$)$ Since
			\begin{align*}
				\la_{\varphi}^{*}=	e^{\varphi} \la^{*}_{A,q} e^{-\varphi} 
			\end{align*}
			therefore we have 
			\begin{align*}
				\la_{\varphi}^{*}v&=e^{\varphi}\Big(-\PD_{t}-\Delta+2{A(t,x)}\cdot\nabla_{x}+\widetilde{q}^{*}(t,x)\Big) e^{-\varphi}v(t,x)\\
				&=\Big[-\PD_{t}-\Delta+2{A(t,x)}\cdot\nabla_{x}+\widetilde{q}^{*}(t,x)-2\lambda \omega\cdot {A(t,x)}+2\lambda\omega\cdot\nabla_{x}\Big]v(t,x)\\
			\end{align*}
			where 
			\begin{align*}
				\widetilde{q}^{*}(t,x):= \overline{q}(t,x)+\nabla_{x}\cdot {A(t,x)}-\lvert {A(t,x)}\rvert^{2}.
			\end{align*}
			Writing  $\Lc_{\varphi}^{*}$ as 
			\[\la_{\varphi}^{*}v:= P_{1}^{*}v+P_{2}^{*}v+P_{3}^{*}v\]
			where 
			\begin{align}\label{Definition of Pi*}
				\begin{aligned}
					P_{1}^{*}:=-\Delta,  \ P_{2}^{*}:=-\PD_{t}+2\lambda\omega\cdot\nabla_{x} \ \mbox{and}\ P_{3}^{*}:=2{A(t,x)}\cdot\nabla_{x}-2\lambda\omega\cdot {A(t,x)}+\widetilde{q}^{*}(t,x).
				\end{aligned}
			\end{align}
			Now from \eqref{Definition of Pi}, we have $	P_{1}^{*}=P_{1} \ \mbox{and} \ P_{2}^{*}=-P_{2}.$
			Hence using the arguments similar to Theorem \ref{boundary}, we have
			\begin{align*}
				\int\limits_{Q} \lvert\nabla v(t,x) \rvert^2 \d x\d t+\lambda^2\int\limits_{Q}\lvert v(t,x)\rvert^2 \d x\d t\leq C\int_{Q} \lvert\la^*_{\varphi}v(t,x)\rvert^2 \d x\d t
			\end{align*}
			for some constant $C>0$ independent of $\lambda$ and $v$.
			The above estimate can be written in compact form as 
			\begin{align}\label{main estimateadj}
				\lVert v \rVert_{L^{2}\lb 0,T;H^{1}_{\lambda}(\Rn)\rb} \leq C \lVert \la^{*}_{\varphi}v\rVert_{L^{2}(Q)},\
			\mbox{for some constant $C$ independent of $\lambda$ and $v$.}
			\end{align}
			Next  using the pseudodifferential operators techniques, we  shift the index by $-1$ in the above estimate. {Let us denote by $ \widetilde{\Omega} $ a bounded open subset of $ \Rn $ such that $ \overline{\Omega} \subset \widetilde{\Omega} $.  Fix $w\in C^{1}\lb [0,T];C_{c}^{\infty}(\Omega)\rb$ satisfying $w(T,x)=0$ and  consider the following }
			\[ \langle D_{x},\lambda \rangle^{-1} ( P^*_1+P^*_2) \langle D_{x},\lambda \rangle w.\]
			Using the composition of pseudodifferential operators \cite[Theorem 18.1.8]{Hormander} we have
			\begin{align}\label{pseudocompositionadj}
				\langle D_{x},\lambda \rangle^{-1} ( P^*_1+P^*_2) \langle D_{x},\lambda \rangle w = (P^*_1 +P^*_2 ) w.
			\end{align}
			Using \eqref{pseudocompositionadj} and  \eqref{estimateforprinci} we have 
			\begin{align}\label{interiorestiadj}
				\begin{aligned} 
					&\lVert (P^*_1+P^*_2) \langle D_{x},\lambda \rangle w\rVert_{\lth}= \lVert \langle D_{x},\lambda \rangle^{-1} (P^*_1+P^*_2) \langle D_{x},\lambda \rangle w\rVert_{\lt}\\
					&= \lVert (P^*_1+P^*_2)w \rVert_{\lt}\geq \sqrt{s}  \lVert \nabla w\rVert_{\lt} +\lambda \lVert w\rVert_{\lt}
				\end{aligned}
			\end{align}
			holds for $\lambda$ large.	
			Now consider 
			\begin{align*}
				\begin{aligned}
					&\lVert P_{3}^{*}  \langle D_{x},\lambda \rangle w\rVert_{\lth} \leq 2 \bigg( \lVert (  \lambda \omega\cdot {A(t,x)}  \langle D_{x},\lambda \rangle w \rVert_{\lth}\\
					&\ \ \ \ \ \ \ \ \ + \lVert {A(t,x)}\cdot\nabla_{x} \langle D_{x},\lambda \rangle w \rVert_{\lth} +\lVert \widetilde{q}^{*}  w \rVert_{\lth}\bigg).
				\end{aligned}
			\end{align*}
			Using the boundedness of the coefficients, we have
			\begin{align*}
				\begin{aligned}
					&\lVert P_{3}^{*}  \langle D_{x},\lambda \rangle w\rVert_{\lth} \leq  2 \bigg( \lambda \lVert A \rVert_{\infty}\lVert w \rVert_{\lt}\\
					&\ \ \ \ \qquad\qquad \qquad\qquad \qquad + \lVert A \rVert_{\infty}\lVert \nabla w \rVert_{\lt} + \lVert\tilde{q}\rVert_{\infty} \lVert w \rVert_{\lt} \bigg).
				\end{aligned}
			\end{align*}
			Hence using the inequality as used in  \eqref{Definition of I L2 norm of conjugated operator} we get 
			\begin{align*}
				\lVert \la^*_{\varphi} \langle D,\lambda \rangle w\rVert_{\lth}
				\geq C \lVert w\rVert_{\L^{2}\lb 0,T;H^{1}_{\lambda}(\Rn)\rb}.
			\end{align*}
			{Now let $ \chi \in C^{\infty}_{c}(\widetilde{\Omega})$ such that $ \chi =1 $ in $\overline{\Omega}_{1}$ where $ \overline{\Omega} \subset \Omega_1 \subset \widetilde{\Omega}$.} Fix $ w= \chi \langle D,\lambda \rangle^{-1}v$ in the above equation and using  
			\[ \lVert (1-\chi) \langle D,\lambda \rangle^{-1}v \rVert_{L^{2}(0,T;H^{m}_{\lambda}(\Rn))} \leq \frac{C}{\lambda^2} \lVert v \rVert_{L^{2}\lb 0,T;L^2(\Rn)\rb}\]
			and 
			\begin{align*}
			\begin{aligned}
				\lVert v \rVert_{\lt}&= \lVert \langle D,\lambda \rangle^{-1} v \rVert_{L^{2}(0,T;H^{1}_{\lambda}(\Rn))}\\
				&\leq \lVert w\rVert_{L^{2}(0,T;H^{1}_{\lambda}(\Rn))}+\lVert (1-\chi) \langle D,\lambda \rangle^{-1}v \rVert_{L^{2}(0,T;H^{1}_{\lambda}(\Rn))}\\
				&\leq \lVert w\rVert_{L^{2}(0,T;H^{1}_{\lambda}(\Rn))} +\frac{C}{\lambda^2} \lVert v \rVert_{L^2(0,T;L^2(\Rn))}
				\end{aligned}
			\end{align*}
			we get
			\begin{align*}
			\begin{aligned}
				\lVert \la_{\varphi}^* v\rVert_{\lth}&\geq \lVert \la_{\varphi}^* \langle D,\lambda \rangle w\rVert_{\lth} - \frac{C}{\lambda^2} \lVert v \rVert_{L^2(0,T;L^2(\Rn))}\\
				&\geq  \lVert w\rVert_{L^{2}(0,T;H^{1}_{\lambda}(\Rn))} -\frac{C}{\lambda^2} \lVert v \rVert_{L^2(0,T;L^2(\Rn))}\geq C \lVert v\rVert_{\lt} 
				\end{aligned}
			\end{align*}
for  large $ \lambda $.
			Thus finally, we have 
				\[\lVert v\rVert_{\lt} \leq C 	\lVert \la_{\varphi}^* v\rVert_{\lth}\]
				holds for  $\ v\in C^{1}\lb [0,T];C_{c}^{\infty}(\Omega)\rb$ such that $v(T,x)=0$ and  $\lambda$  large.
			\item (Proof for \eqref{intericarle})  follows  by exactly the same argument as that for \eqref{intericarleadj}.
		\end{enumerate}
	\end{proof}	
\end{proposition}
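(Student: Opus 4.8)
The plan is to obtain both estimates from the interior form of the Carleman estimate already established in Theorem~\ref{boundary}, upgraded by one negative Sobolev index by means of the $\lambda$-dependent pseudodifferential calculus associated with the class $S^{m}_{\lambda}(\Rn)$. I will describe the argument for \eqref{intericarleadj}; the estimate \eqref{intericarle} then follows by identical reasoning applied to $\Lc_{\varphi}$ in place of $\Lc_{\varphi}^{*}$, with the terminal condition $v(T,x)=0$ replaced by the initial condition $v(0,x)=0$.

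First I would record the estimate at index $+1$. Writing the conjugated adjoint as $\Lc_{\varphi}^{*}=P_{1}^{*}+P_{2}^{*}+P_{3}^{*}$, where $P_{1}^{*}=-\Delta$, $P_{2}^{*}=-\PD_{t}+2\lambda\,\omega\cdot\nabla_{x}$, and $P_{3}^{*}$ collects the first- and zeroth-order terms carrying $A$ and $\widetilde{q}^{*}$, one sees that $P_{1}^{*}=P_{1}$ and $P_{2}^{*}=-P_{2}$ differ from the operators in \eqref{Definition of Pi} only by a sign that leaves the integration-by-parts identity \eqref{estimateforprinci} intact. Hence, for $v\in C^{1}\lb[0,T];C_{c}^{\infty}(\Omega)\rb$ with $v(T,\cdot)=0$, every lateral boundary integral over $\Sigma$ vanishes and the terminal term is nonnegative, so the computation leading to \eqref{main estimate} yields $\lambda^{2}\lVert v\rVert_{\lt}^{2}+\lVert\nabla_{x}v\rVert_{\lt}^{2}\leq C\lVert\Lc_{\varphi}^{*}v\rVert_{L^{2}(Q)}^{2}$, that is, $\lVert v\rVert_{L^{2}\lb0,T;H^{1}_{\lambda}(\Rn)\rb}\leq C\lVert\Lc_{\varphi}^{*}v\rVert_{L^{2}(Q)}$.

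Next I would lower the index by conjugating with $\langle D_{x},\lambda\rangle$. The key point is that $P_{1}^{*}$ and $P_{2}^{*}$ have $x$-independent symbols, so $\langle D_{x},\lambda\rangle$ commutes with $P_{1}^{*}+P_{2}^{*}$ exactly; the composition calculus \cite[Theorem 18.1.8]{Hormander} gives $\langle D_{x},\lambda\rangle^{-1}(P_{1}^{*}+P_{2}^{*})\langle D_{x},\lambda\rangle=P_{1}^{*}+P_{2}^{*}$. Applying the index-$+1$ estimate to a test function $w$ and using that $\langle D_{x},\lambda\rangle^{-1}$ maps $L^{2}(0,T;H^{m}_{\lambda})$ isometrically onto $L^{2}(0,T;H^{m+1}_{\lambda})$, I obtain a lower bound $\lVert(P_{1}^{*}+P_{2}^{*})\langle D_{x},\lambda\rangle w\rVert_{\lth}\geq c\lb\lVert\nabla_{x}w\rVert_{\lt}+\lambda\lVert w\rVert_{\lt}\rb$ for some $c>0$. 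The perturbation is then controlled crudely by $\lVert P_{3}^{*}\langle D_{x},\lambda\rangle w\rVert_{\lth}\leq C\lb\lambda\lVert A\rVert_{\infty}+\lVert A\rVert_{\infty}+\lVert\widetilde{q}^{*}\rVert_{\infty}\rb\lVert w\rVert_{L^{2}\lb0,T;H^{1}_{\lambda}(\Rn)\rb}$, and since $\lVert A\rVert_{\infty}\leq 1/(9R)$ the factor $\lambda\lVert A\rVert_{\infty}$ is absorbed into the principal lower bound for $\lambda$ large, giving $\lVert\Lc_{\varphi}^{*}\langle D_{x},\lambda\rangle w\rVert_{\lth}\geq C\lVert w\rVert_{L^{2}\lb0,T;H^{1}_{\lambda}(\Rn)\rb}$.

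Finally I would drop the restriction that $v$ be of the form $\langle D_{x},\lambda\rangle w$ with $w$ compactly supported. Taking a cutoff $\chi\in C_{c}^{\infty}(\widetilde{\Omega})$ equal to $1$ on a neighbourhood $\Omega_{1}$ of $\overline{\Omega}$ and setting $w=\chi\langle D_{x},\lambda\rangle^{-1}v$, the identity $\lVert v\rVert_{\lt}=\lVert\langle D_{x},\lambda\rangle^{-1}v\rVert_{L^{2}\lb0,T;H^{1}_{\lambda}(\Rn)\rb}$ together with the pseudolocal smoothing bound $\lVert(1-\chi)\langle D_{x},\lambda\rangle^{-1}v\rVert_{L^{2}\lb0,T;H^{m}_{\lambda}(\Rn)\rb}\leq C\lambda^{-2}\lVert v\rVert_{\lt}$ allows me to substitute this $w$ into the bound of the previous step at the cost of $O(\lambda^{-2})$ errors, which are absorbed for $\lambda$ large to produce \eqref{intericarleadj}. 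I expect the genuine work to lie in this last step: one must verify that commuting $\langle D_{x},\lambda\rangle$ through the variable-coefficient operator $\Lc_{\varphi}^{*}$ and through the cutoff $\chi$ generates only remainders negligible against the principal terms in the relevant $\lambda$-Sobolev norms, with constants uniform in $\lambda$ — standard but essential bookkeeping in the class $S^{m}_{\lambda}(\Rn)$ that is exactly what legitimizes the index shift.
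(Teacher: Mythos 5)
Your proposal is correct and follows essentially the same route as the paper's own proof: the same decomposition $\Lc_{\varphi}^{*}=P_{1}^{*}+P_{2}^{*}+P_{3}^{*}$ with the index-$(+1)$ Carleman estimate obtained from the computation of Theorem \ref{boundary} (with, as you rightly note, the surviving time-boundary term now at $t=0$ and of favourable sign), the same exact commutation of $P_{1}^{*}+P_{2}^{*}$ with $\langle D_{x},\lambda\rangle$ via the composition calculus of \cite[Theorem 18.1.8]{Hormander}, the same crude $L^{2}$ bound on $P_{3}^{*}\langle D_{x},\lambda\rangle w$ absorbed using $\lVert A\rVert_{\infty}\leq 1/(9R)$, and the same final cutoff step with $w=\chi\langle D_{x},\lambda\rangle^{-1}v$ and the pseudolocal $O(\lambda^{-2})$ remainder. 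No substantive deviation from the paper's argument, and no gap.
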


\begin{proposition}\label{exisofsolu}
	Let $ \varphi $, $A $ and $ q $ be as in Theorem \ref{boundary}. 
	\begin{enumerate}
		\item $($Existence of solution to $\la_{A,q}$ $)$ For $ \lambda > 0 $ large enough and {$ v\in L^{2}(Q)$} there exists a solution  {$  u\in H^{1}\lb 0,T; H^{-1}(\Omega)\rb\cap L^{2}\lb0,T;H^{1}(\Omega)\rb$} of 
		\begin{align*}
			\begin{aligned}
				\begin{cases}
					&\la_{\varphi} u(t,x)=v(t,x),\ (t,x)\in Q,\\
					&u(0,x)=0;\ x\in\Omega
				\end{cases}
			\end{aligned}
		\end{align*}
		and it satisfies 
		\begin{align}\label{Estimate for existence}
			{\lVert u \rVert_{L^{2}\lb 0,T;H^{1}(\Omega)\rb} \leq C \lVert v \rVert_{L^{2}(Q)}}
		\end{align}
		where $ C > 0  $ is a constant independent of $\lambda$.
		\item $($Existence of solution to $\la^*_{A,q}$$)$ 	For $ \lambda > 0 $ large enough and {$ v\in L^{2}(Q)$} there exists a solution  {$  u\in H^{1}\lb 0,T; H^{-1}(\Omega)\rb\cap L^{2}\lb0,T;H^{1}(\Omega)\rb$} of 
		\begin{align*}
			\begin{aligned}
				\begin{cases}
					&\la^{*}_{\varphi} u(t,x)=v(t,x),\ (t,x)\in Q,\\
					&u(T,x)=0,\ x\in\Omega
				\end{cases}
			\end{aligned}
		\end{align*} 
		and it satisfies
		\begin{align}\label{Estimate for adjoint existence}
			{\lVert u \rVert_{L^{2}\lb 0,T;H^{1}(\Omega)\rb} \leq C \lVert v \rVert_{L^{2}(Q)}}
		\end{align}
		where $ C > 0  $ is a constant independent of $\lambda$.
	\end{enumerate}

	\begin{proof}
		\begin{enumerate}
			We will give the proof for existence of  solution to $\la_{A,q}$ and the proof for $\la^{*}_{A,q}$ follows by using similar arguments. The proof is based on the standard functional analysis arguments. Consider the space $ S:=\{ \la^{*}_{\varphi} u: u\in C^{1}\lb[0,T];C^{\infty}_{c}(\Omega)\rb\ \mbox{and}\ u(T,x)=0\} $ as a subspace of $ \lth $. Define the linear operator  $ T $ on $ S $ by 
			\begin{align*}
				T(\la^{*}_{\varphi} z) = \int_{Q} z(t,x) v(t,x) \d t \d x, \ \ ~ \text{for $ {v \in L^{2}(Q)}$}.
			\end{align*}
			Now using the Carleman estimates \eqref{intericarleadj}, we have
			\begin{align*}
				\lvert T(\la^{*}_{\varphi} z) \rvert \leq \lVert z \rVert_{L^2(Q)} \lVert v \rVert_{L^2(Q)} \leq C  \lVert v \rVert_{L^2(Q)} \lVert \la^{*}_{\varphi} z \rVert_{\lth}
			\end{align*}
			holds for 	$z\in C^{1}\lb [0,T];C_{c}^{\infty}(\Omega)\rb$ with $z(T,x)=0$.	Hence using the Hahn-Banach theorem, we can extend the linear operator $ T $ to $ \lth $. We denote the extended map as $ T $ and it satisfies $$ \lVert T \rVert \leq C \lVert v \rVert_{L^2(Q)}. $$ 
			Since $ T $ is bounded linear functional on $ \lth $ therefore using the  Riesz representation theorem there exists a unique $ u \in L^2(0,T;H^1_{\lambda}(\Rn) ) $ such that
			\begin{align}\label{Exitence of u after using Riesz}
				\ \ \ \ \ \ 	T(f)= \langle f,u \rangle_{{L^{2}\lb 0,T;H^{-1}_{\lambda}(\Rb^{n})\rb,L^{2}\lb 0,T;H^{1}_{\lambda}(\Rb^{n})\rb}}\  \mbox{for}\  f \in \lth
			\end{align}
			with $ \lVert u \rVert_{L^2(0,T;H^1_{\lambda}(\Rn) )} \leq C \lVert v \rVert_{L^2(Q)} $.  Now for $z\in C^{1}\lb[0,T];C_{c}^{\infty}(\Omega)\rb $ satisfying $z(T,x)=0$. Choosing $f=\la_{\varphi}^{*}z$ in the above equation, we get $\la_{\varphi}u=v$.
			{Using the expression for $\la_{\varphi}$ from \eqref{Expression for Lphi} and the fact that $u\in L^{2}\lb 0,T;H^{1}(\Omega)\rb$ and $v\in L^{2}(Q)$,  we get that $\PD_{t}u\in L^{2}\lb 0,T;H^{-1}(\Omega)\rb$. Hence we have $  u\in H^{1}\lb 0,T; H^{-1}(\Omega)\rb\cap L^{2}\lb0,T;H^{1}(\Omega)\rb$.}
			
			Next we will show that $u(0,x)=0$ for $x\in\Omega$. To prove this we choose $f=\la_{\varphi}^{*}z$ where  $z\in C^{1}\lb [0,T];C_{c}^{\infty}(\Omega)\rb$ and $z(T,x)=0$. Using this choice of $f$  in \eqref{Exitence of u after using Riesz}, we have 
			\[\int\limits_{Q}\la_{\varphi}^{*}z(t,x)u(t,x)\D x\D t=\int\limits_{Q}z(t,x)v(t,x)\D x\D t.\]
			Now using integration by parts and the fact that $\la_{\varphi}u=v$, we get 
			\[\int\limits_{\Omega}u(0,x)z(0,x)\D x=0.\] The above identity holds for any $z\in C^{1}\lb [0,T];C_{c}^{\infty}(\Omega)\rb$ satisfying $z(T,x)=0$. Therefore, we conclude that $u(0,x)=0$ for $x\in\Omega$. This completes the proof of first part of Proposition \ref{exisofsolu} .
		\end{enumerate}
	\end{proof}	
\end{proposition}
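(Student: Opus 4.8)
The plan is to establish existence by a standard functional-analytic duality argument in which the interior Carleman estimates of Proposition~\ref{Carleman estimate of negative order} supply the required a priori bounds. I would prove the first assertion in detail---solving $\la_{\varphi}u=v$ with $u(0,\cdot)=0$---and obtain the second (for $\la_{\varphi}^{*}$ with $u(T,\cdot)=0$) by the symmetric argument, interchanging the roles of the two estimates \eqref{intericarle} and \eqref{intericarleadj}.

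First I would introduce $S:=\{\la_{\varphi}^{*}z:\ z\in C^{1}([0,T];C_{c}^{\infty}(\Omega)),\ z(T,\cdot)=0\}$, regarded as a subspace of $\lth$, and define a linear functional $T$ on $S$ by $T(\la_{\varphi}^{*}z)=\int_{Q}z\,v\,\d x\,\d t$. The estimate \eqref{intericarleadj} forces $\la_{\varphi}^{*}$ to be injective on this class, so $T$ is well defined; moreover Cauchy--Schwarz together with \eqref{intericarleadj} gives $\lvert T(\la_{\varphi}^{*}z)\rvert\le\lVert z\rVert_{\lt}\lVert v\rVert_{L^{2}(Q)}\le C\lVert v\rVert_{L^{2}(Q)}\lVert \la_{\varphi}^{*}z\rVert_{\lth}$, so $T$ is bounded on $S$. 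By the Hahn--Banach theorem I would extend $T$ to all of $\lth$ with the same norm, and then invoke the Riesz representation theorem in the Hilbert space $\lth$ to produce $u\in L^{2}(0,T;H^{1}_{\lambda}(\Rn))$ representing $T$ through the duality pairing between $\lth$ and $L^{2}(0,T;H^{1}_{\lambda}(\Rn))$, with $\lVert u\rVert_{L^{2}(0,T;H^{1}_{\lambda}(\Rn))}\le C\lVert v\rVert_{L^{2}(Q)}$. Testing this representation against $f=\la_{\varphi}^{*}z$ recovers $\la_{\varphi}u=v$ in the distributional sense.

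It then remains to upgrade the regularity and to extract the initial condition. Using the explicit expression \eqref{Expression for Lphi} for $\la_{\varphi}$ together with the memberships $u\in L^{2}(0,T;H^{1}(\Omega))$ and $v\in L^{2}(Q)$, I would read off $\PD_{t}u\in L^{2}(0,T;H^{-1}(\Omega))$, placing $u$ in the asserted space $H^{1}(0,T;H^{-1}(\Omega))\cap L^{2}(0,T;H^{1}(\Omega))$. Finally, integrating by parts in the identity $\int_{Q}\la_{\varphi}^{*}z\,u=\int_{Q}z\,v$ and using $\la_{\varphi}u=v$ cancels the interior terms and leaves only the temporal boundary contribution $\int_{\Omega}u(0,x)z(0,x)\,\d x$, which must therefore vanish for every admissible $z$; since $z(0,\cdot)$ ranges over a dense set this forces $u(0,\cdot)=0$.

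I expect the main subtlety to be the negative-order index bookkeeping rather than any single hard inequality: the Riesz step lands in $L^{2}(0,T;H^{1}_{\lambda}(\Rn))$ precisely because the Carleman estimate was shifted by $-1$ via the pseudodifferential calculus in Proposition~\ref{Carleman estimate of negative order}, and one must verify that the pairing used is the correct one so that the constant $C$ stays independent of $\lambda$. The injectivity needed to make $T$ well defined and the approximation that legitimizes the integration by parts are the remaining points requiring care, but both follow directly from the estimates already in hand.
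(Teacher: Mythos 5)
Your proposal is correct and follows essentially the same route as the paper: the same dual space $S$, the same functional $T$ bounded via the negative-order Carleman estimate \eqref{intericarleadj}, Hahn--Banach extension, Riesz representation in the $\lth$--$L^{2}(0,T;H^{1}_{\lambda}(\Rb^{n}))$ duality, and the same integration-by-parts argument for the initial condition and regularity upgrade. Your added remark on the injectivity of $\la_{\varphi}^{*}$ making $T$ well defined is a point the paper leaves implicit, but it does not change the argument.
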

\subsection{Proof of the Theorem \ref{Construction of solution}}
Using expressions $v_{g}$ and $B_{g}$ from \eqref{growingsolu} and \eqref{expreofbg} respectively and 
\begin{align*}
	\la_{A,q}v_{g}(t,x)=0,
\end{align*}
we have the equation for $R_{g}$ is 
\begin{align*}
	\la_{\varphi}R_{g}(t,x,\lambda)=-\la_{A,q}B_{g}(t,x),
\end{align*}
where {$\la_{A,q}B_{g}(t,x)\in L^{2}(Q)$}. 
Next using Proposition \ref{exisofsolu}, there exists $R_{g}\in  L^{2}\lb 0,T;H^{1}(\Omega)\rb\cap H^{1}\lb 0,T;H^{-1}(\Omega)\rb $ solution to 
\begin{align*}
	\begin{aligned}
		\begin{cases}
			&\la_{\varphi}R_{g}(t,x,\lambda)=-\la_{A,q}B_{g}(t,x),\ (t,x)\in Q,\\
			&R_{g}(0,x,\lambda)=0, \ x\in\Omega
		\end{cases}
	\end{aligned}
\end{align*} 
and it satisfies the following estimate
{\begin{align*}
		\lVert R_{g}\rVert_{L^{2}\lb 0,T;H^{1}(\Omega)\rb}\leq C 
\end{align*}}
where $C$ is a constant independent of $\lambda$. This completes the construction of  solution for  $\Lc_{A,q}u=0$ and  
existence of the solution for $\la^{*}_{A,q}v=0$, follows in a similar way.

\section{Integral identity}\label{Integral identity}
This section is devoted to proving an integral identity which will be used to prove the main result of this article. We derive this identity by using the geometric optics solutions constructed in \S \ref{Contruction of go solutions}.
Let $u_{i}$ be the solutions to the following initial boundary value problems with vector field coefficient $A^{(i)}$ and scalar potential $q_{i}$ for $i=1,2$.
\begin{align}\label{Equation for ui}
	\begin{aligned}
		\begin{cases}
			&\Lc_{{A}^{(i)},q_{i}}u_{i}(t,x)=0,\ (t,x)\in Q\\
			&u_{i}(0,x)=0, \ x\in\Omega\\
			&u_{i}(t,x)=f(t,x),\ (t,x)\in \Sigma.
		\end{cases}
	\end{aligned}
\end{align} 
Let us denote 
\begin{align}\label{Difference defn}
	\notag &u(t,x):=\lb u_{1}-u_{2}\rb(t,x)\\
	\notag &{{A}(t,x):=\lb{A}^{(1)}-{A}^{(2)}\rb(t,x):=\lb A_{1}(t,x),\cdots,A_{n}(t,x)\rb}\\
	&\wt{q}_{i}(t,x):=-\nabla_{x}\cdot{{A}^{(i)}(t,x)-\lvert{A}^{(i)}(t,x)\rvert^{2}}+q_{i}(t,x)\\
	\notag &\wt{q}(t,x):= \wt{q}_{1}(t,x)-\wt{q}_{2}(t,x).
\end{align}
Then $u$ is  solution to the following initial boundary value problem: 
\begin{align}\label{Equation for u linear}
	\begin{aligned}
		\begin{cases}
			&	\Lc_{{A}^{(1)},q_{1}}u(t,x)
			=2{A(t,x)}\cdot\nabla_{x}u_{2}(t,x)+\wt{q}u_{2}(t,x),\ (t,x)\in Q\\
			&	u(0,x)=0,\ x\in\Omega\\
			&u(t,x)=0,\  (t,x)\in\Sigma.
		\end{cases}
	\end{aligned}
\end{align}
Let $v(t,x)$ of the form given by \eqref{decayingsolu}  be the solution to following equation 
\begin{align}\label{adjoint equation for u1}
	\Lc^{*}_{{A}^{(1)},{q}_{1}}v(t,x)
	=0,\  (t,x)\in Q.
\end{align}
Also let $u_{2}$ of the form given by \eqref{growingsolu} be solution to the following equation
\begin{align}\label{equation for u2}
	\begin{aligned}\
		\Lc_{{A}^{(2)},q_{2}}u_{2}(t,x)
		&=0,\   (t,x)\in Q.
	\end{aligned}
\end{align}
Since the right hand side of \eqref{Equation for u linear} lies in $L^{2}(Q)$ therefore using (\cite{Choulli_Book}, Theorem $1.43$) we have $u\in L^{2}\lb 0,T;H^{2}(\Omega)\rb \cap H^{1}\lb0,T;L^{2}(\Omega)\rb$ and $\partial_{\nu}u\in L^{2}\lb0,T;H^{1/2}(\Sigma)\rb.$  Next consider the following
\begin{align*}
	\begin{aligned}
		&\Big\langle \lb \Lambda_{A^{(1)},q_{1}}-\Lambda_{A^{(2)},q_{2}}\rb(f),v|_{\Sigma}\Big\rangle_{\mathcal{H}_{T}^{*},\mathcal{H}_{T}}=\Big\langle \mathcal{N}_{A^{(1)},q_{1}}u_{1}-\mathcal{N}_{A^{(2)},q_{2}}u_{2},v|_{\Sigma}\Big\rangle_{\mathcal{H}_{T}^{*},\mathcal{H}_{T}}\\
		&\ \  =\int\limits_{Q}\lb -u_{1}\PD_{t}\overline{v}+\nabla_{x}u_{1}\cdot\nabla_{x}\overline{v}+2u_{1} A^{(1)}\cdot\nabla_{x}\overline{v}+(\nabla_{x}\cdot A^{(1)}) u_{1}\overline{v}-\lvert A^{(1)}\rvert^{2}u_{1}v+q_{1}u_{1}\overline{v}    \rb\D x\D t\\
		&\ \ \ - \int\limits_{Q}\lb -u_{2}\PD_{t}\overline{v}+\nabla_{x}u_{2}\cdot\nabla_{x}\overline{v}+2u_{2} A^{(2)}\cdot\nabla_{x}\overline{v}+(\nabla_{x}\cdot A^{(2)}) u_{2}\overline{v}-\lvert A^{(2)}\rvert^{2}u_{2}v+q_{2}u_{2}\overline{v}    \rb\D x\D t. 
	\end{aligned}
\end{align*}
After following the arguments  used in $[$\cite{Caro_Kian_Convection_nonlinear}, see Proposition $2.3$$]$, we get that 
\begin{align}\label{Difference of DN maps}
	\begin{aligned}
		&\Big\langle \lb \Lambda_{A^{(1)},q_{1}}-\Lambda_{A^{(2)},q_{2}}\rb(f),v|_{\Sigma}\Big\rangle_{\mathcal{H}_{T}^{*},\mathcal{H}_{T}}=\int\limits_{Q}\lb 2{A(t,x)}\cdot\nabla_{x}u_{2}(t,x)+\wt{q}(t,x)u_{2}(t,x)\rb\overline{v(t,x)}\D x \D t.
	\end{aligned}
\end{align}

Also multiplying \eqref{Equation for u linear} by $\overline{v}(t,x)$ and integrating over $Q$, we have 
\begin{align*}
	\begin{aligned}
		&\int\limits_{Q}\lb 2{A(t,x)}\cdot\nabla_{x}u_{2}(t,x)+\wt{q}(t,x)u_{2}(t,x)\rb\overline{v(t,x)}\D x \D t=\int\limits_{Q}\Lc_{A^{(1)},q_{1}}u(t,x) \overline{v(t,x)} \D x \D t\\
		&\ \ \ =\int\limits_{Q}u(t,x) \overline{\Lc^{*}_{A^{(1)},{q}_{1}}v(t,x)} \D x \D t-\int\limits_{\Sigma}\partial_{\nu}u(t,x)\overline{v(t,x)}\D S_{x}\D t + \int\limits_{\Omega}u(T,x)\overline{v(T,x)}\D x
	\end{aligned}
\end{align*}
where in deriving the above identity we have used the following: $u|_{\Sigma}=0$, $u|_{t=0}=0$ and $A^{(1)}=A^{(2)}$ on $\Sigma$. Now using Equation \eqref{Difference of DN maps} and the fact that $\Lc^{*}_{{A}^{(1)},{q}_{1}}v(t,x)=0$ in $Q$, with $v(T,x)=0$ in $\Omega$,
we get,
\begin{align}\label{Differnce of DN maps equal to integration of Neumann derivative}
	&\Big\langle \lb \Lambda_{A^{(1)},q_{1}}-\Lambda_{A^{(2)},q_{2}}\rb(f),v|_{\Sigma}\Big\rangle_{\mathcal{H}_{T}^{*},\mathcal{H}_{T}}=-\int\limits_{\Sigma}\PD_{\nu}u(t,x)\overline{v(t,x)}\D S_{x}\D t. 
\end{align}
This gives us 
\begin{align}\label{Differnece of DN equal to Neuman derivative}
	\lb \Lambda_{A^{(1)},q_{1}}-\Lambda_{A^{(2)},q_{2}}\rb(f)|_{\Sigma}=-\PD_{\nu}u|_{\Sigma}. 
\end{align}
Using \eqref{Equality of DN map}, we have  $\PD_{\nu}u|_{G}=0$. Finally  using Equations \eqref{Differnce of DN maps equal to integration of Neumann derivative}, \eqref{Differnece of DN equal to Neuman derivative} and $\PD_{\nu}u|_{G}=0$, in \eqref{Difference of DN maps}, we get
\begin{align}\label{Integral identity before using GO}
	\int\limits_{Q}\left(2{A(t,x)}\cdot\nabla_{x}u_{2}(t,x)+\wt{q}(t,x)u_{2}(t,x)\right)\overline{v(t,x)}\D x \D t=
	-\int\limits_{\Sigma\setminus{G}}\partial_{\nu}u(t,x)\overline{v(t,x)}\D S_{x}\D t.
\end{align}
Next we need to estimate the right hand side of  above equation. This we will do in the following lemma:
\begin{lemma}\label{Estimate for remainder lemma}
	Let  $u_{i}$ for $i=1,2$ solutions to \eqref{Equation for ui} with $u_{2}$ of the form \eqref{growingsolu}. Let $u=u_{1}-u_{2}$ and $v$ be of the form \eqref{decayingsolu}. Then 
	\begin{align}\label{Estimate for remainder equation}
		\begin{aligned}
			\Big\lvert\int\limits_{\Sigma\setminus  G}\partial_{\nu}u(t,x)\overline{v(t,x)}\D S_{x}\D t\Big\rvert &\leq  C\lambda^{1/2}
		\end{aligned}
	\end{align}
	for all $\omega\in \Sb^{n-1}$ such that  $\lvert\omega-\omega_{0}\rvert\leq \epsilon$.
\end{lemma}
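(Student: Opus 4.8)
The plan is to bound the boundary integral by the Cauchy--Schwarz inequality, splitting the Carleman weight so that one factor is governed by the boundary Carleman estimate of Theorem~\ref{boundary} and the other by the explicit form of the decaying geometric optics solution $v=v_d$ from \eqref{decayingsolu}. Throughout I apply Theorem~\ref{boundary} to $u=u_1-u_2$, which satisfies $u(0,\cdot)=0$ and $u|_\Sigma=0$, and for which $\Lc_{A^{(1)},q_1}u=2A\cdot\nabla_x u_2+\wt q\,u_2$ by \eqref{Equation for u linear}; since $u\in L^{2}\lb0,T;H^{2}(\Omega)\rb\cap H^{1}\lb0,T;L^{2}(\Omega)\rb$ with $\PD_\nu u\in L^{2}\lb0,T;H^{1/2}(\Sigma)\rb$, the estimate extends from $C^2(\overline Q)$ to $u$ by a standard density argument.

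First I would record the geometric input. The compact set $\PD\Omega\setminus G'$ is disjoint from the $\om_0$-illuminated face $\{\nu\cdot\om_0\le0\}\subset G'$, so $\nu\cdot\om_0\ge\delta>0$ there for some $\delta$; consequently $\{\nu\cdot\om_0\le\ve\}\subset G'$ once $\ve<\delta$. For $\lvert\om-\om_0\rvert\le\ve$ and $\nu\cdot\om\le0$ one has $\nu\cdot\om_0\le\ve$, hence $\PD\Omega_{-,\om}\subset G'$, i.e. $\Sigma_{-,\om}\subset G$ and $\Sigma\setminus G\subset\Sigma_{+,\om}$ with $\om\cdot\nu\ge\delta>0$ on $\Sigma\setminus G$. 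Writing $\PD_\nu u\,\overline v=\lb e^{-\vp}\lvert\om\cdot\nu\rvert^{1/2}\PD_\nu u\rb\lb e^{\vp}\lvert\om\cdot\nu\rvert^{-1/2}\overline v\rb$ and applying Cauchy--Schwarz gives
\[
\Lm\int_{\Sigma\setminus G}\PD_\nu u\,\overline v\,\D S_x\D t\Rm\le\lb\int_{\Sigma\setminus G}e^{-2\vp}\lvert\om\cdot\nu\rvert\lvert\PD_\nu u\rvert^2\,\D S_x\D t\rb^{1/2}\lb\int_{\Sigma\setminus G}\frac{e^{2\vp}}{\lvert\om\cdot\nu\rvert}\lvert v\rvert^2\,\D S_x\D t\rb^{1/2}.
\]

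For the first factor I enlarge the domain to $\Sigma_{+,\om}$ and invoke Theorem~\ref{boundary}. Since $\PD_\nu u|_G=0$ and $\Sigma_{-,\om}\subset G$, the unknown boundary term over $\Sigma_{-,\om}$ on the right of \eqref{bdryesti} vanishes, so $\lambda\int_{\Sigma_{+,\om}}e^{-2\vp}\lvert\om\cdot\nu\rvert\lvert\PD_\nu u\rvert^2\le C\int_Q e^{-2\vp}\lvert\Lc_{A^{(1)},q_1}u\rvert^2$. Inserting $u_2=e^{\vp}(B_g+R_g)$ and using $e^{-\vp}\nabla_x u_2=\lambda\om(B_g+R_g)+\nabla_x(B_g+R_g)$, the dominant contribution to $e^{-\vp}\Lc_{A^{(1)},q_1}u$ is the term $2\lambda\,(\om\cdot A)(B_g+R_g)$ of size $O(\lambda)$ in $L^2$; the estimate $\lVert R_g\rVert_{L^{2}\lb0,T;H^1_\lambda(\Rn)\rb}\le C$ (which yields $\lambda\lVert R_g\rVert_{L^2}\le C$ and $\lVert\nabla_x R_g\rVert_{L^2}\le C$) together with the boundedness of $B_g,\nabla_x B_g$ and $\wt q$ bounds the whole integral by $C\lambda^2$. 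Hence this factor is $\le C\lambda^{1/2}$.

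For the second factor, $v=e^{-\vp}(B_d+R_d)$ gives $e^{2\vp}\lvert v\rvert^2=\lvert B_d+R_d\rvert^2$, while $\lvert\om\cdot\nu\rvert^{-1}\le\delta^{-1}$ on $\Sigma\setminus G$. The amplitude $B_d$ is bounded, and the trace theorem yields $\lVert R_d\rVert_{L^2(\Sigma)}\le C\lVert R_d\rVert_{L^{2}\lb0,T;H^{1}(\Omega)\rb}\le C\lVert R_d\rVert_{L^{2}\lb0,T;H^{1}_\lambda(\Rn)\rb}\le C$, so this factor is $\le C$. Multiplying the two bounds yields \eqref{Estimate for remainder equation}. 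I expect the main obstacle to be the geometric step guaranteeing $\Sigma_{-,\om}\subset G$ uniformly for $\lvert\om-\om_0\rvert\le\ve$ (so that the uncontrolled boundary term is annihilated by $\PD_\nu u|_G=0$), together with the bookkeeping showing that the exponentially growing factor $u_2$ contributes only the sharp power $\lambda^2$ to the interior integral.
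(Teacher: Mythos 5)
Your proof is correct and follows essentially the same route as the paper: Cauchy--Schwarz on $\Sigma\setminus G$, the geometric containment $\Sigma\setminus G\subseteq\Sigma_{+,\epsilon,\omega}$ (equivalently $\Sigma_{-,\omega}\subseteq G$, which both kills the uncontrolled boundary term in \eqref{bdryesti} and bounds $\omega\cdot\nu$ from below), the boundary Carleman estimate of Theorem~\ref{boundary}, and the $O(\lambda^{2})$ bound on $\int_{Q}e^{-2\varphi}\lvert\Lc_{A^{(1)},q_{1}}u\rvert^{2}$. The only cosmetic differences are that you place the weight $\lvert\omega\cdot\nu\rvert^{1/2}$ inside the Cauchy--Schwarz splitting while the paper inserts the factor $\partial_{\nu}\varphi/(\lambda\epsilon)\geq 1$ afterwards, and your lower bound on $\Sigma\setminus G$ should read $\omega\cdot\nu\geq\delta-\epsilon$ rather than $\delta$, which is harmless.
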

\begin{proof}
	Using the expression of $v$ from \eqref{decayingsolu}, in the right-hand side of \eqref{Integral identity before using GO}, we have 
	\begin{align*}
		\begin{aligned}
			&\Big\lvert\int\limits_{\Sigma\setminus  G}\partial_{\nu}u(t,x)\overline{v(t,x)}\D S_{x}\D t\Big\rvert^{2} \leq
			\Big\lvert\int\limits_{\Sigma\setminus G}\PD_{\nu}u(t,x)e^{-\vp(t,x)}\lb \overline{B_{d}}(t,x)+\overline{R_{d}}(t,x)\rb\D S_{x}\D t\Big\rvert^{2}\\
			&\qquad \qquad \leq C \left(1+\lVert R_{d}\rVert^2_{L^2(\Sigma)} \right) \int\limits_{\Sigma\setminus{G}}e^{-2\vp(t,x)}\lvert\PD_{\nu}u(t,x)\rvert^{2}\D S_{x}\D t\\
			&\qquad \qquad \quad \leq C\lb 1+\lVert R_{d}\rVert_{{L^{2}\lb 0,T;H^{1}(\Omega)\rb}}\rb\int\limits_{\Sigma\setminus G} e^{-2\vp(t,x)}\lvert\PD_{\nu}u(t,x)\rvert^{2}\D S_{x}\D t
		\end{aligned}		
	\end{align*}
	where in the last step of above inequality we have used the trace theorem. Now 	using Equation \eqref{estimofrd}, we get
	\begin{align*}
		\Big\lvert\int\limits_{\Sigma\setminus  G}\partial_{\nu}u(t,x)\overline{v(t,x)}\D S_{x}\D t\Big\rvert^{2} \leq C \int\limits_{\Sigma\setminus G} e^{-2\vp(t,x)}\lvert\PD_{\nu}u(t,x)\rvert^{2}\D S_{x}\D t.
	\end{align*}
	For $\ve>0$, define 
	\begin{align*}
		\begin{aligned}
			\PD\Omega_{+,\ve,\omega}:=\{x\in \PD \Omega: \nu(x) \cdot \omega > \ve\}\ \mbox{and} \ \Sigma_{+,\ve,\om}:=(0,T)\times \PD \Omega_{+,\ve,\omega}
		\end{aligned}
	\end{align*}
	then from the definition of $G$ it follows that  $\Sigma\setminus{G}\subseteq \Sigma_{+,\ve,\omega}$ for all $\om$ with $\lvert \omega-\omega_{0}\rvert\leq \ve$. Using this we obtain
	\begin{align*}
		\begin{aligned}
			&\int\limits_{\Sigma\setminus{G}}e^{-2\vp(t,x)}\lvert\PD_{\nu}u(t,x)\rvert^{2}\D S_{x}\D t\leq \int\limits_{\Sigma_{+,\ve,\om}}e^{-2\vp(t,x)}\lvert\PD_{\nu}u(t,x)\rvert^{2}\D S_{x}\D t\\
			&\qquad =\frac{1}{\lambda\ve}\int\limits_{\Sigma_{+,\ve,\om}}\lambda\ve	e^{-2\vp(t,x)}\lvert\PD_{\nu}u(t,x)\rvert^{2}\D S_{x}\D t,\ \mbox{for}\  \omega\in\Sb^{n-1} \ \mbox{near}\ \omega_{0}\in\Sb^{n-1}.
		\end{aligned}
	\end{align*}
	Now $\lambda\epsilon\leq \PD_{\nu}\varphi(t,x)$ for $(t,x)\in \Sigma_{+,\epsilon,\omega}$ and $\omega\in \Sb^{n-1}$ with $\lvert \omega-\omega_{0}\rvert\leq \epsilon$. Using  this in above equation, we get
	\begin{align*}
		\begin{aligned}
			\int\limits_{\Sigma\setminus{G}}e^{-2\vp(t,x)}\lvert\PD_{\nu}u(t,x)\rvert^{2}\D S_{x}\D t\leq \frac{1}{\lambda\ve}\int\limits_{\Sigma_{+,\ve,\om}}\PD_{\nu}\vp e^{-2\vp(t,x)}\lvert\PD_{\nu}u(t,x)\rvert^{2}\D S_{x}\D t, \ 
		\end{aligned}
	\end{align*} 
	for $\omega\in\Sb^{n-1} \mbox{near}\ \omega_{0}\in\Sb^{n-1}$.
	Now using the Carleman estimate \eqref{bdryesti} and Equation \eqref{Equation for u linear}, we get 
	\begin{align}
		\begin{aligned}\label{Estimate over the boundary terms}
			&\Big\lvert\int\limits_{\Sigma\setminus  G}\partial_{\nu}u(t,x)\overline{v(t,x)}\D S_{x}\D t\Big\rvert^{2} \leq  C \lambda^{-1} \int\limits_{Q} e^{-2\vp(t,x)}\lvert\lb 2{A(t,x)}\cdot\nabla u_{2}(t,x)+\wt{q}(t,x)u_{2}(t,x)\rb\rvert^{2}\D x\D t.
		\end{aligned}
	\end{align}
	Using expression for $u_{2}$ from \eqref{growingsolu} and Equation \eqref{estimofrg}, we have
	\begin{align*}
		\int\limits_{Q} e^{-2\vp(t,x)}\lvert\lb 2{A(t,x)}\cdot\nabla u_{2}(t,x)+\wt{q}(t,x)u_{2}(t,x)\rb\rvert^{2}\D x\D t \le C \lambda^2.
	\end{align*}
	Hence using this in \eqref{Estimate over the boundary terms}, we get
	\begin{align*}
		\begin{aligned}
			\Big\lvert\int\limits_{\Sigma\setminus  G}\partial_{\nu}u(t,x)\overline{v(t,x)}\D S_{x}\D t\Big\rvert &\leq  C\lambda^{1/2},\ \text{for}\ \omega\in\Sb^{n-1}\ \text{such that} \ \lvert\om-\om_{0}\rvert\leq \epsilon.
		\end{aligned}
	\end{align*}
	This completes the proof of lemma.
\end{proof}
\section{Proof of theorem \ref{Main Theorem} and Corollary \ref{Corollary}}\label{proof of the main theorem}
In this section, we prove the uniqueness results.
Since from \eqref{Integral identity before using GO}, we have 
\begin{align*}
	\begin{aligned}
		\int\limits_{Q}\left(2{A(t,x)}\cdot\nabla_{x}u_{2}(t,x)+\wt{q}(t,x)u_{2}(t,x)\right)\overline{v(t,x)}\D x \D t=
		-\int\limits_{\Sigma\setminus{G}}\partial_{\nu}u(t,x)\overline{v(t,x)}\D S_{x}\D t.
		%+ \int\limits_{\Omega}u(T,x)\overline{v(T,x)}\D x.
	\end{aligned}
\end{align*} 
Now using Equation \eqref{Estimate for remainder equation}, we have
\begin{align*}
	\Big\lvert\int\limits_{Q}\left(2{A(t,x)}\cdot\nabla_{x}u_{2}(t,x)+\wt{q}(t,x)u_{2}(t,x)\right)\overline{v(t,x)}\D x \D t\Big\rvert\leq C\lambda^{1/2}.
\end{align*}
After dividing the above equation by $\lambda$ and taking $\lambda\to\infty$, we have 
\begin{align}\label{Integral idebtity after diving by lambda and taking lambda goes to infty}
	\lim_{\lambda\to\infty}\lb\frac{1}{\lambda}\int\limits_{Q}\left(2{A(t,x)}\cdot\nabla_{x}u_{2}(t,x)+\wt{q}(t,x)u_{2}(t,x)\right)\overline{v(t,x)}\D x \D t\rb=0.
\end{align}
Next using the expression for $u_{2}$ and $v$ from \eqref{growingsolu} and \eqref{decayingsolu} respectively, we have 
\begin{align*}
	\begin{aligned}
		\int\limits_{Q}\omega\cdot {A(t,x)}B_{g}(t,x)\overline{B_{d}(t,x)}  \D x\D t =0,\ \mbox{for all }\ \omega\in \Sb^{n-1} \ \mbox{such that} \ \lvert\omega-\omega_{0}\rvert\leq \epsilon.
	\end{aligned}
\end{align*}
This after using the expressions for $B_{g}(t,x)$ and $B_{d}(t,x)$ from Equations \eqref{expreofbg} and  \eqref{expreofbd} respectively, we get 
\begin{align}
	\begin{aligned}
		\int\limits_{Q}\omega\cdot {A(t,x)}\chi^{2}(t)e^{-i\xi\cdot x-i\tau t}\exp\lb-\int\limits_{0}^{\infty}\omega\cdot {A(t,x+s\omega)}\D s\rb \D x\D t =0,\ 
	\end{aligned}
\end{align}
$\mbox{for }\omega\in \Sb^{n-1} \ \mbox{with} \ \lvert\omega-\omega_{0}\rvert\leq \epsilon$.
Since the above identity holds for all $\chi\in C_{c}^{\infty}(0,T)$, therefore we get 
\begin{align}\label{final integral identity}
	\int\limits_{\mathbb{R}^{n}}\omega\cdot {A(t,x)}(x)e^{-i\xi\cdot x} \exp\lb-\int\limits_{0}^{\infty}\omega\cdot {A(t,x+s\omega)}ds\rb \D x  =0
\end{align}
where $\xi\cdot\omega=0$ for all $\omega$ with $\lvert\omega-\omega_{0}\rvert\leq \epsilon$. Now decompose $\mathbb{R}^{n}=\mathbb{R}\omega\oplus\omega^{\perp}$ and using this in the above equation, we have
\begin{align*}
	\begin{aligned}
		\int\limits_{\omega^{\perp}}e^{-i\xi\cdot k}\lb\int\limits_{\mathbb{R}}\omega\cdot {A(t,k+\tau\omega)}\exp\lb-\int\limits_{0}^{\infty}\omega\cdot {A\lb t,k+ \tau\omega +s\omega\rb}\D s\rb\D \tau \rb\D k=0,\  \mbox{for}\ \omega\ \mbox{with}\ \lvert\omega-\omega_{0}\rvert\leq \epsilon
	\end{aligned}
\end{align*}
here $\D k$ denotes the Lebesgue measure on $\omega^{\perp}$. After substituting $\tau+s=\tilde{s}$, we get
\begin{align}\label{Integral identity after using decomposition}
	\begin{aligned}
		\int\limits_{\omega^{\perp}}e^{-i\xi\cdot k}\lb \int\limits_{\mathbb{R}}\omega\cdot {A(t,k+\tau\omega)}\exp\lb-\int\limits_{\tau}^{\infty}\omega\cdot {A(t,k+\tilde{s}\omega)}\D \tilde{s}\rb\D \tau \rb\D k=0,\  \text{for } \omega \ \text{with}\ \lvert\omega-\omega_{0}\rvert\leq \epsilon.
	\end{aligned}
\end{align}
Now 
\begin{align*}
	\begin{aligned}
		& \int\limits_{\omega^{\perp}}e^{-i\xi\cdot k}\lb \int\limits_{\mathbb{R}}\omega\cdot {A(t,k+\tau\omega)}\exp\lb-\int\limits_{\tau}^{\infty}\omega\cdot {A(t,k+\tilde{s}\omega)}\D \tilde{s}\rb\D \tau \rb\D k\\
		&\  =\int\limits_{\omega^{\perp}}e^{-i\xi\cdot k}\int\limits_{\mathbb{R}}\frac{\PD}{\PD\tau}\exp\lb -\int\limits_{\tau}^{\infty}\omega\cdot {A(t,k+s\omega)}\D s\rb\D \tau\D k\\
		&\  =\int\limits_{\omega^{\perp}}e^{-i\xi\cdot k} \lb 1-\exp\lb-\int\limits_{\mathbb{R}} \omega\cdot {A(t,k+s\omega)}\D s\rb\rb\D k.
	\end{aligned}   
\end{align*}
Combining this with \eqref{Integral identity after using decomposition}, we get 
\begin{align*}
	\int\limits_{\mathbb{R}}\omega\cdot {A(t,k+s\omega)}\D s=0, \ \text{for} \ k\in\omega^{\perp} \  \text{with}\ \ \lvert\omega-\omega_{0}\rvert\leq \epsilon.
\end{align*}
Now using the decomposition $\mathbb{R}^{n}=\mathbb{R}\omega\oplus\omega^{\perp}$ in the above equation, we get 
\begin{align}\label{Ray transform}
	\int\limits_{\mathbb{R}}\omega\cdot {A(t,x+s\omega)}\D s=0, \ \text{for} \ x\in \Rb^{n} \  \text{with}\ \ \lvert\omega-\omega_{0}\rvert\leq \epsilon.
\end{align}
Thus we have the ray transform of vector field $A$ is vanishing  in a very small enough  neighbourhood of  fixed direction $\omega_{0}$. In order to get the uniqueness for vector field term $A$, we need to invert this ray transform which we will do in the following lemma:

\begin{lemma} \label{Ray transform lemma}
	Let ${n\geq 2}$ and $F=(F_{1},F_{2},\cdots,F_{n})$ be a real-valued time-dependent vector field with $F_{j}\in C_{c}^{\infty}(Q)$ for all $1\leq j\leq n$. Suppose for each $t\in (0,T)$ we have
	\begin{align*}
		IF(t,x,\omega):=\int\limits_{\mathbb{R}}\omega\cdot F(t,x+s\omega)\D s=0
	\end{align*}
	for all $\omega\in \Sb^{n-1}$ with  $\lvert\omega-\omega_{0}\rvert\leq \epsilon$, for some $\epsilon>0$ and for all $x\in\mathbb{R}^{n}$. Then for each  $t\in (0,T)$ there exists a $\Phi(t,\cdot)\in C_{c}^{\infty}(\Omega)$ such that $F(t,x)=\nabla_{x}\Phi(t,x)$.
	\begin{proof} The proof uses the  arguments similar to the one used in  \cite{Krishnan_Vashisth_Relativistic,Siamak_Light_Ray_Vector_Field,Stefanov_light_Ray} for the case of light ray transforms. We assume that $t\in(0,T)$ is arbitrary but fixed.
		We have the ray transform of $F$ at $x\in\Rb^{n}$ in the direction of $\omega\in\Sb^{n-1}$ is given by 
		\begin{align*}
			\begin{aligned}
				IF(t,x,\omega)=\int\limits_{\mathbb{R}}\omega\cdot F(t,x+s\omega)\D s.
			\end{aligned}
		\end{align*}
		Now let  $\eta:=(\eta_{1},\eta_{2},\cdots,\eta_{n})\in\Rb^{n}$ be arbitrary and denote $\omega:=(\omega^{1},\omega^{2},\cdots,\omega^{n})\in \Sb^{n-1}$. Then we have 
		\begin{align}\label{I equation Ray transform}
			\begin{aligned}
				(\eta\cdot\nabla_{x})IF(t,x,\omega)=\sum_{i,j=1}^{n}\int\limits_{\Rb}\omega^{i}\eta_{j}\PD_{j}F_{i}(t,x+s\omega)\D s.
			\end{aligned}
		\end{align}
		Since $F$ has compact support  therefore using the Fundamental theorem of calculus, we have 
		\begin{align*}
			\int\limits_{\Rb}\frac{\D}{\D s}(\eta\cdot F)(t,x+s\omega)\D s=0
		\end{align*}
		which gives
		\begin{align}\label{II equation Ray transform}
			\sum_{i,j=1}^{n}\int\limits_{\Rb}  \omega^{i}\eta_{j}\PD_{i}F_{j}(t,x+s\omega)\D s=0.
		\end{align}
		Subtracting \eqref{II equation Ray transform} from \eqref{I equation Ray transform}, we get 
		\begin{align}\label{Vanishing of transvese ray transform}
			\sum_{i,j=1}^{n}\int\limits_{\Rb}\omega^{i}\eta_{j}h_{ij}(t,x+s\omega)\D s=0,  \ \text{for} \  x\in\Rb^{n} \ \text{and}\  \omega\in \Sb^{n-1}\ \text{near a fixed }\ \omega_{0}\in\Sb^{n-1}
		\end{align}
		where $h_{ij}$ is an $n\times n$ matrix with entries 
		\begin{align*}
			h_{ij}(t,x)=\lb\PD_{j}F_{i}-\PD_{i}F_{j}\rb(t,x),\ \text{for} \ 1\leq i,j\leq n.
		\end{align*}
		Define the Fourier transform of $\sum_{i,j=1}^{n}\omega^{i}\eta_{j}{h}_{ij}(t,x)$ with respect to space variable $x$ by 
		\begin{align*}
			\sum_{i,j=1}^{n}\omega^{i}\eta_{j}\widehat{h}_{ij}(t,\xi)= \sum_{i,j=1}^{n}\int\limits_{\mathbb{R}^{n}}\omega^{i}\eta_{j}h_{ij}(t,x)e^{-i\xi\cdot x}\D x,\ \xi\in\Rb^{n}.
		\end{align*}
		Now decomposing $\mathbb{R}^{n}=\mathbb{R}\omega\oplus\omega^{\perp}$ and using \eqref{Vanishing of transvese ray transform}, we get
		\begin{align}\label{Vanishing of FT of omegai eta j hij}
			\begin{aligned}
				\sum_{i,j=1}^{n}\omega^{i}\eta_{j}\widehat{h}_{ij}(t,\xi)=0,\ \text{for all}\ \eta\in\mathbb{R}^{n},\ \xi\in\omega^{\perp}\ \text{and}\ \omega\ \text{near}\ \omega_{0}.  
			\end{aligned}
		\end{align}
		The goal is to prove that $\widehat{h}_{ij}(t,\xi)=0, \  \mbox{for}\  \xi\in\omega^{\perp}\ \text{with}\ \omega\ \text{near}\ \omega_{0}$ \mbox{and  for each}  $ t\in (0,T) $.		
		From the definition of $ \widehat{h}_{ij}(t,\xi)$, it is clear that\[ \widehat{h}_{ii}(t,\xi)=0\  \mbox{and}\ \widehat{h}_{ij}(t,\xi)=-\widehat{h}_{ji}(t,\xi),\ \mbox{for}\  1\le i,j\le n.\] 
		For $n=2$,  equation \ref{Vanishing of FT of omegai eta j hij} gives us
		\begin{align}\label{Vanishin FT of h12 hat}
			(\omega^1 \eta_2- \omega^2\eta_1)\widehat{h}_{12}(t,\xi)=0, \   \text{for}\ \eta\in\mathbb{R}^{2},\ \xi\in\omega^{\perp}\ \text{and}\ \omega\ \text{near}\ \omega_{0}.
		\end{align} 		
		Now choosing $ \eta = \lb \omega_{2},-\omega_{1}\rb\in \omega^{\perp}$ in \eqref{Vanishin FT of h12 hat}, we get $ \widehat{h}_{12}(\xi)=0. $ Next we show that $\widehat{h}_{ij}(t,\xi)=0$ when $n\geq 3$. 
		Let $\{e_{j}: 1\leq j\leq n\}$ be the standard basis for $\mathbb{R}^{n}$ where $e_{j}$ is is given by
		\[e_{j}:=(0,0,\cdots,0,{\underbrace{1}_{j\text{th}}},0,\cdots,0)\]
		and 
		for simplicity we fix $\omega_{0}=e_{1}$. Now let $\xi_{0}=e_{2}$ 
		be a fixed vector in $\mathbb{R}^{n}$. Our first aim is to show that $\widehat{h}_{ij}(\xi_{0})=0$, for all $1\leq i,j\leq n$, then later we will prove that $\widehat{h}_{ij}(t,\xi)=0$ for $1\leq i,j\leq n$ and  $\xi$ near $\xi_{0}$. Following \cite{Krishnan_Vashisth_Relativistic}, consider a small perturbation $\omega_{0}(a)$ of vector $\omega_{0}=e_{1}$ by
		\[\omega_{0}(a):= \cos a e_{1}+\sin a e_{k}\ \text{where} \ 3\leq k\leq n.\]
		Then we have $\omega_{0}(a)$ is near $\omega_{0}$ for $a$ near $0$ and $\xi_{0}\cdot\omega_{0}(a)=0$. Hence using these choices of $\omega_{0}(a)$ and $\eta=e_{j}$ in\eqref{Vanishing of FT of omegai eta j hij}, we have 
		\begin{align*}
			\begin{aligned}
				\cos a\widehat{h}_{1j}(t,\xi_{0})+\sin a\widehat{h}_{kj}(t,\xi_{0})=0,\ \text{for} \ 1\leq j\leq n, \  \ 3\leq k\leq n \ \text{and}\  a\  \text{near}\  0.
			\end{aligned}
		\end{align*}
		This gives us 
		\begin{align*}
			\widehat{h}_{1j}(t,\xi_{0})=0,\ \widehat{h}_{kj}(t,\xi_{0})=0,\ \text{for} \ 1\leq j\leq n, \text{and} \ 3\leq k\leq n.
		\end{align*}
		After using the fact that $\widehat{h}_{ij}=-\widehat{h}_{ji}$ for $1\leq i,j\leq n$, we get 
		\begin{align*}
			\widehat{h}_{ij}(t,\xi_{0})=0,\ \text{for} \ 1\leq i,j\leq n.
		\end{align*}
		Next we show that $\widehat{h}_{ij}(t,\xi)=0$ for $\xi\in\omega^{\perp}$ with $\omega$ near $\omega_{0}$. 
		Using the spherical  co-ordinates, we choose  $ \xi \in\Sb^{n-1} $ as follows
		\begin{align*}
			\xi^{1}& = \sin\phi_1 \cos\phi_2\\
			\xi^{2}&= \cos\phi_1\\
			\xi^{3}&= \sin\phi_1 \sin\phi_2 \cos\phi_3\\
			\vdots\\
			\xi^{n-1}&=  \sin\phi_1 \sin\phi_2 \cdots \sin\phi_{n-2}\cos\theta\\
			\xi^{n}&=  \sin\phi_1 \sin\phi_2 \cdots \sin\phi_{n-2}\sin\theta.
		\end{align*}
		Let $ A $ be an orthogonal matrix such that $ A \xi=e_{2} $, where $ A $ is given by
		\begin{align}\label{orthogonal matrix}
			A=\begin{pmatrix}
				\frac{\PD \xi^{1}}{\PD \phi_1}&\frac{\PD \xi^{2}}{\PD \phi_1}& \frac{\PD \xi^{3}}{\PD \phi_1}&\cdots& \frac{\PD \xi^{n}}{\PD \phi_1}\\
				\xi^1 &\xi^2& \xi^3&\cdots&\xi^n\\
				a_{31}& a_{32}&a_{33}&\cdots&a_{3n}\\
				\vdots& \vdots&\vdots& \ddots&\vdots\\
				a_{k1}& a_{k2}&a_{k3}&\cdots&a_{kn}\\
				a_{k+11}& a_{k+11}&a_{k+13}&\cdots&a_{k+1n}\\
				\vdots& \vdots&\vdots& \ddots&\vdots\\
				a_{n1}& a_{n2}&a_{n3}&\cdots&a_{nn}	
			\end{pmatrix}
			.
		\end{align}
		Now	choose
		\[\widetilde{\omega}= \lb\frac{\PD \xi^{1}}{\PD \phi_1},\frac{\PD \xi^{2}}{\PD \phi_1}, \frac{\PD \xi^{3}}{\PD \phi_1},\cdots, \frac{\PD \xi^{n}}{\PD \phi_1}\rb \in \mathbb{S}^{n-1} \] 
		then $ \widetilde{\omega} $ is near $ \omega_{0} =e_1$ when $ \phi_{i} $'s
		and $ \theta $ are close to $ 0 $. Next choose $ \omega_{0}(a) = \cos a e_1 +\sin a e_{l} $ with $l\neq 2$, then $\omega_{0}(a)$ is close to $ e_1 $
		when $ a$ is close to zero.  Now define $\omega(a)$ by 
		\begin{align*}
			\omega(a):= A^{T} \omega_{0}(a)=
			\begin{pmatrix}
				\cos a   \cos \phi_1 \cos\phi_2+ a_{l1} \sin a \\
				- \cos a  \sin \phi_1+a_{l2}  \sin a \\
				\vdots \\
				\cos a  \cos \phi_1\sin\phi_2 \cdots\sin\phi_{n-2}\cos \theta + a_{ln} \sin a 
			\end{pmatrix}
			.
		\end{align*}	
		Then we have $\omega(a)$ is close $\widetilde{\omega}$ for $a$ near $0$ and $\tilde{\omega}$ is close to $\omega_{0}$ when $\phi_{i}$ and $\theta$ are close to zero. Also we can see that  $\omega(a)\cdot\xi=A^{T}\omega_{0}(a)\cdot\xi=\omega_{0}(a)\cdot A\xi=\omega_{0}(a)\cdot e_{2}=0$, hold  because of the choice of $\omega_{0}(a)$.
		Hence using these choices of $\omega(a)$ and choosing  $ \eta = e_{j} $ in \eqref{Vanishing of FT of omegai eta j hij}, we have
		\begin{align*}
			\cos a  \left( \sum_{i=1}^{n} \frac{\PD \xi^{i}}{\PD \phi_{1}} \widehat{h}_{ij}(t,\xi) \right) +\sin a 
			\left(\sum_{i=1}^{n} a_{li} \widehat{h}_{ij}(t,\xi)  \right)=0,\ \mbox{for} 1\le j \le n \  \mbox{and}\  a \ \mbox{near}\  0.
		\end{align*} 
		Now since $ \sin a $ and $ \cos a $ are linearly independent, therefore we get
		\begin{equation}\label{independent relations}
			\begin{aligned}
				\sum_{i=1}^{n} \frac{\PD \xi^{i}}{\PD \phi_{1}} \widehat{h}_{ij}(t,\xi)&=0,\ \mbox{for all}\ 1\leq j\leq n\\
				\sum_{i=1}^{n} a_{li} \widehat{h}_{ij}(t,\xi)&=0, \ \mbox{for all}\ 1\leq j\leq n\ \mbox{and}\  l\neq 2.
			\end{aligned} 
		\end{equation}
		Above equations can be written as 
		\begin{align}\label{Syetem of equations}
			\begin{pmatrix}
				\frac{\PD \xi^{1}}{\PD \phi_1}&\frac{\PD \xi^{2}}{\PD \phi_1}& \frac{\PD \xi^{3}}{\PD \phi_1}&\cdots& \frac{\PD \xi^{n}}{\PD \phi_1}\\
				a_{31}& a_{32}&a_{33}&\cdots&a_{3n}\\
				\vdots& \vdots&\vdots& \ddots&\vdots\\
				a_{k1}& a_{k2}&a_{k3}&\cdots&a_{kn}\\
				a_{k+1,1}& a_{k+1,2}&a_{k+1,3}&\cdots&a_{k+1,n}\\
				\vdots& \vdots&\vdots& \ddots&\vdots\\
				a_{n1}& a_{n2}&a_{n3}&\cdots&a_{nn}.
			\end{pmatrix} \begin{pmatrix}
				\widehat{h}_{1j}(t,\xi)\\
				\widehat{h}_{2j}(t,\xi)\\
				\widehat{h}_{3j}(t,\xi)\\
				\vdots\\
				\vdots\\
				\vdots\\
				\widehat{h}_{nj}(t,\xi)
			\end{pmatrix} =0.
		\end{align}
		Now let us define matrix $ B $ and a n-vector $\textbf{h}_{j}$ as follows:
		\begin{align*}
			B= \begin{pmatrix}
				\frac{\PD \xi^{1}}{\PD \phi_1}&\frac{\PD \xi^{2}}{\PD \phi_1}& \frac{\PD \xi^{3}}{\PD \phi_1}&\cdots& \frac{\PD \xi^{n}}{\PD \phi_1}\\
				a_{31}& a_{32}&a_{33}&\cdots&a_{3n}\\
				\vdots& \vdots&\vdots& \ddots&\vdots\\
				a_{k1}& a_{k2}&a_{k3}&\cdots&a_{kn}\\
				a_{k+1,1}& a_{k+1,2}&a_{k+1,3}&\cdots&a_{k+1,n}\\
				\vdots& \vdots&\vdots& \ddots&\vdots\\
				a_{n1}& a_{n2}&a_{n3}&\cdots&a_{nn}
			\end{pmatrix}
			and \ 
			\textbf{h}_{j}(t,\xi)=
			\begin{pmatrix}
				\widehat{h}_{1j}(t,\xi)\\
				\widehat{h}_{2j}(t,\xi)\\
				\widehat{h}_{3j}(t,\xi)\\
				\vdots\\
				\vdots\\
				\vdots\\
				\widehat{h}_{nj}(t,\xi)
			\end{pmatrix}.
		\end{align*}
		Using these, we have equation \eqref{Syetem of equations}, can be written as
		\begin{align}\label{System of equations in compact form}
			B\textbf{h}_{j}(t,\xi)=0,\ \text{for} \ 1\leq j\leq n.
		\end{align}
		Note that the matrix $ B $ is obtained from $ A $ by removing the second row and it is $(n-1)\times n$ matrix. From the definition of $A$ it is clear that rank of $ A $ is $n$, so the rank of $ B $ is $ n-1 $. $ i.e.$ there exists at-least one non-zero minor of order $n-1$ of the matrix $B$. Without loss of generality assume $ B' $ is non-zero minor of order $ n-1 $,  where $B'$ is given by 
		\begin{align*}
			B'=\begin{pmatrix}
				\frac{\PD \xi^{2}}{\PD \phi_1}& \frac{\PD \xi^{3}}{\PD \phi_1}&\cdots& \frac{\PD \xi^{n}}{\PD \phi_1}\\
				a_{32}&a_{33}&\cdots&a_{3n}\\
				\vdots&\vdots& \ddots&\vdots\\
				a_{k2}&a_{k3}&\cdots&a_{kn}\\
				a_{k+1,2}&a_{k+1,3}&\cdots&a_{k+1,n}\\
				\vdots&\vdots& \ddots&\vdots\\
				a_{n2}&a_{n3}&\cdots&a_{nn}
			\end{pmatrix}.
		\end{align*}
		Now using the fact $ \widehat{h}_{11}(t,\xi)=0 $ in \eqref{System of equations in compact form}, we have
		\begin{align}\label{equations for B'}
			B' \textbf{h}'_{1}(t,\xi)=0
		\end{align}
		where $ \textbf{h}'_{1}=\left(\widehat{h}_{21}(t,\xi),
		\widehat{h}_{31}(t,\xi),\cdots,
		\widehat{h}_{n1}(t,\xi)\right)^T.$  Since $ B' $ has full rank therefore $  \textbf{h}'_{1}(t,\xi)=0 $. Also using the fact $ \widehat{h}_{ij}(t,\xi)= -\widehat{h}_{ji}(t,\xi) $ and $ \textbf{h}_{1}(t,\xi)=0 $,  we have 
		\begin{align}\label{sysetm of equ for B'}
			B'\textbf{h}'_{j}(t,\xi)=0, \  \mbox{for} \  2\le j\le n
		\end{align}
		where $\textbf{h}_{j}'$ is an $(n-1)$ vector obtained after deleting $jth$ entry from $\textbf{h}_{j}$. %=(\widehat{h}_{1j},\widehat{h}_{2j},\cdots,\widehat{h}_{nj})^{T}$.
		Now using \eqref{equations for B'}  and \eqref{sysetm of equ for B'} in \eqref{System of equations in compact form}, we get	
		\[ \textbf{h}_{j}(t,\xi)=0,\ \ \mbox{for} \ 1\le j \le n\]
		which gives us 
		\[\widehat{h}_{ij}(t,\xi) =0,\ \mbox{for}\ 1\leq i,j\leq n\ \mbox{and\ $\xi$ near $e_{2}$.} \]
		Since $\widehat{h}_{ij}$ are compactly supported therefore using the  Paley-Wiener theorem, we have 
		\[\widehat{h}_{ij} (t,\xi) = 0,\ \mbox{for} \ 1\leq i, j \leq n\ \xi\in \Rb^{n} \ \mbox{and} \ t\in (0,T).\] 
		Fourier inversion formula gives us
		$h_{ij} (t,x) = 0$ for  $x\in\mathbb{R}^{n}$ and for each $t\in (0,T)$. 
		Finally after using the definition of $h_{ij}(t,x)$ and the Poincar\'e lemma, there exists a $\Phi(t,\cdot)\in C_{c}^{\infty}(\mathbb{R}^{n})$ such that $F(t,x)=\nabla_{x}\Phi(t,x)$ for $x\in\Omega$ and  for each $t\in (0,T)$.  This completes the proof of Lemma \ref{Ray transform lemma}.
	\end{proof}

\end{lemma}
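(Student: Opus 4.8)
The plan is to show that the hypothesis forces the exterior derivative of $F(t,\cdot)$ to vanish, so that $F(t,\cdot)$ is closed and hence, on the simply connected domain, a gradient. Fix $t\in(0,T)$ throughout. First I would differentiate the ray transform in a spatial direction $\eta\in\Rb^{n}$, which gives
\[
(\eta\cdot\nabla_{x})IF(t,x,\omega)=\sum_{i,j=1}^{n}\int_{\Rb}\omega^{i}\eta_{j}\,\PD_{j}F_{i}(t,x+s\omega)\,\D s .
\]
Since each $F_{j}$ is compactly supported, applying the fundamental theorem of calculus to $s\mapsto(\eta\cdot F)(t,x+s\omega)$ yields $\sum_{i,j}\int_{\Rb}\omega^{i}\eta_{j}\PD_{i}F_{j}(t,x+s\omega)\,\D s=0$. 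Because $IF\equiv0$ on the admissible cone of directions, the left-hand side of the displayed identity also vanishes, and subtracting the two relations produces the vanishing of the weighted ray transform of the antisymmetric matrix $h_{ij}:=\PD_{j}F_{i}-\PD_{i}F_{j}$, i.e. $\sum_{i,j}\int_{\Rb}\omega^{i}\eta_{j}h_{ij}(t,x+s\omega)\,\D s=0$ for all $x$, all $\eta$, and all $\omega$ near $\omega_{0}$.

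Next I would pass to the Fourier transform in $x$. By the Fourier slice theorem (decomposing $\Rb^{n}=\Rb\omega\oplus\omega^{\perp}$ and integrating in the variable on $\omega^{\perp}$), the vanishing of the ray transform becomes the algebraic relation
\[
\sum_{i,j=1}^{n}\omega^{i}\eta_{j}\,\widehat{h}_{ij}(t,\xi)=0,\qquad \xi\in\omega^{\perp},\ \omega\ \text{near}\ \omega_{0},\ \eta\in\Rb^{n}\ \text{arbitrary}.
\]
Letting $\eta$ run over the standard basis reduces this to the statement that the antisymmetric matrix $\widehat{h}(t,\xi)$ is annihilated on the left by every unit vector $\omega$ lying in the small spherical cap around $\omega_{0}$ that is orthogonal to $\xi$.

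The crux is then a linear-algebra argument showing that this single family of orthogonality constraints already forces $\widehat{h}_{ij}(t,\xi)=0$ on an open set of $\xi$. For $n=2$ there is only the component $\widehat{h}_{12}$, and an explicit choice of $\omega\in\omega^{\perp}$ kills it. For $n\ge3$ I would first fix a reference pair, say $\omega_{0}=e_{1}$ and $\xi_{0}=e_{2}$, and perturb $\omega_{0}$ inside $\xi_{0}^{\perp}$ by $\omega_{0}(a)=\cos a\,e_{1}+\sin a\,e_{k}$ for $3\le k\le n$; the linear independence of $\cos a$ and $\sin a$ extracts enough relations to give $\widehat{h}_{ij}(t,\xi_{0})=0$. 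To propagate this to nearby $\xi$ I would construct an orthogonal frame $A$ with $A\xi=e_{2}$ via spherical coordinates, transport the admissible perturbations through $A^{T}$, and read off a linear system $B\,\mathbf{h}_{j}(t,\xi)=0$, where $B$ is the full-rank $(n-1)\times n$ matrix obtained from $A$ by deleting one row; the nonvanishing of a maximal minor then forces $\mathbf{h}_{j}(t,\xi)=0$. I expect this propagation step—tracking which directions remain admissible as $\xi$ varies and verifying the rank condition uniformly—to be the main obstacle.

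Finally, since the $h_{ij}$ are compactly supported their Fourier transforms are entire, so by Paley--Wiener vanishing on an open set forces $\widehat{h}_{ij}\equiv0$ and hence $h_{ij}(t,x)\equiv0$. Thus $\PD_{j}F_{i}=\PD_{i}F_{j}$, i.e. $F(t,\cdot)$ is closed, and the Poincar\'e lemma yields $\Phi(t,\cdot)$ with $F=\nabla_{x}\Phi$. Because $F$ is supported in a compact subset of $\Omega$, the potential is locally constant off that support and can be normalized to vanish on the unbounded component, giving $\Phi(t,\cdot)\in C_{c}^{\infty}(\Omega)$ and completing the proof.
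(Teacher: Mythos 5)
Your proposal follows essentially the same route as the paper's proof: differentiating the ray transform in $\eta$, subtracting the fundamental-theorem-of-calculus identity to isolate the antisymmetric matrix $h_{ij}=\PD_{j}F_{i}-\PD_{i}F_{j}$, applying the Fourier slice theorem, running the perturbation/linear-algebra argument at $\omega_{0}=e_{1}$, $\xi_{0}=e_{2}$ and propagating to nearby $\xi$ via an orthogonal frame $A$ with $A\xi=e_{2}$ and the rank-$(n-1)$ matrix $B$, and concluding with Paley--Wiener and the Poincar\'e lemma. The step you flag as the main obstacle is carried out in the paper exactly as you sketch it, and your closing remark on normalizing $\Phi$ to be compactly supported is a small addition the paper leaves implicit.
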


\subsection{Proof of Theorem \ref{Main Theorem}} \label{Proof for Main theorem}       Using \eqref{Ray transform} and the fact that $A$ is time-independent, we have
\[	\int\limits_{\mathbb{R}}\omega\cdot {A(x+s\omega)}\D s=0, \ \text{for} \ x\in \Rb^{n} \  \text{with}\ \ \lvert\omega-\omega_{0}\rvert\leq \epsilon.\]  
Hence using Lemma \ref{Ray transform lemma} in the above equation,  there exists $\Phi\in W^{2,\infty}_{0}(\Omega)$ such that \
\begin{align}\label{Uniqueness for A}
	A(x)=\nabla_{x}\Phi(x),\ x\in\Omega.
\end{align}
This completes the proof for recovery of  convection term $A(x)$.  Next we prove the uniqueness for the density coefficients  $q_{i}(t,x)$ for $i=1,2$. 	Since from  \eqref{Uniqueness for A}, we have $A^{(2)}(x)-A^{(1)}(x)=\nabla_{x}\Phi(x)$ for some $\Phi\in W^{2,\infty}_{0}(\Omega)$.  {Now if replace the pair $(A^{(1)},q_{1})$ by $(A^{(3)},q_{3})$ where $A^{(3)}=A^{(1)}+\nabla_{x}\Phi$ and $q_{3}=q_{1}$ then using the fact that $\Phi\in W^{2,\infty}_{0}(\Omega)$  and Equation  \eqref{Equality of DN map}, we get $\Lambda_{A^{(3)},q_{3}}=\Lambda_{A^{(2)},q_{2}}$. Now repeating the previous arguments and Lemma \ref{Ray transform lemma}, there exists $\Phi_{1}\in W^{2,\infty}_{0}(\Omega)$ such that 
	\[A^{(3)}(x)-A^{(2)}(x)=\nabla_{x}\Phi_{1}(x)\]
	which gives us $A^{(3)}(x)=A^{(2)}(x)$ for $x\in\Omega$. Hence using pairs $(A^{(3)},q_{3})$ and $(A^{(2)},q_{2})$ in \eqref{Integral identity before using GO} and the fact that $q_{3}=q_{1}$, we get 
	\begin{align*}
		\int\limits_{Q}q(t,x)u_{2}(t,x)\overline{v(t,x)}\D x \D t =
		-\int\limits_{\Sigma\setminus{G}}\partial_{\nu}u(t,x)\overline{v(t,x)}\D S_{x}\D t 
	\end{align*}
	where $q(t,x):= q_{1}(t,x)-q_{2}(t,x)$.	Now using the expressions for $u_{2}$ and $v$ from \eqref{growingsolu} and \eqref{decayingsolu}  respectively and taking $\lambda\to\infty$, we get 
	\begin{align*}
		\int\limits_{Q}q(t,x)e^{-i(\tau t+x\cdot\xi)}\D x\D t=0,\ \text{for} \ \tau\in\mathbb{R} \ \mbox{and} \ \xi\in\omega^{\perp},  \ \mbox{where} \ \omega\in \Sb^{n-1} \ \mbox{is near} \ \omega_{0}.
	\end{align*}
	Since $q\in L^{\infty}(Q)$ is zero outside $Q$ therefore by using the Paley-Wiener theorem we have
	$q_{1}(t,x)=q_{2}(t,x)$ for $(t,x)\in Q.$ This completes the proof of Theorem \ref{Main Theorem}.
	\subsection{Proof of Corollary \ref{Corollary}}\label{Proof of corollary}{Using Equation \eqref{Ray transform} and Lemma \eqref{Ray transform lemma}, we have  for every $t\in (0,T)$ there exists $\Phi(t,\cdot)\in W^{2,\infty}_{0}(\Omega)$ such that 
		\begin{align}\label{Uniqueness for A time-dependent}
			A^{(2)}(t,x)-A^{(1)}(t,x)=\nabla_{x}\Phi(t,x),\ (t,x)\in Q.
		\end{align}
		Now using Equations \eqref{Divergence free} and \eqref{Uniqueness for A time-dependent}, we have 
		\begin{align*}
			\begin{aligned}
				\begin{cases}
					&\Delta_{x}\Phi(t,x)=0,\ x\in \Omega \ \mbox{and for each  $t\in (0,T)$}\\
					&\Phi(t,x)=0,\ x\in\PD\Omega \ \mbox{and for each  $t\in (0,T)$}.
				\end{cases}
			\end{aligned}
		\end{align*}
		Using the unique solvability for the above equation, we have $\Phi(t,x)=0$ for $(t,x)\in Q$.  Thus from Equation \eqref{Uniqueness for A time-dependent}, we get $A^{(2)}(t,x)=A^{(1)}(t,x)$ for $(t,x)\in Q$. Using this in \eqref{Integral identity before using GO} and repeating the previous arguments, we get $q_{1}(t,x)=q_{2}(t,x)$, $(t,x)\in Q$. 
	
	\section*{Acknowledgments}	
		We thank the anonymous referees for	useful comments which helped us to improve the paper.
	MV thanks Ibtissem Ben A\"icha and  Gen Nakamura for the discussions on this problem.  Both the authors are thankful to Venky Krishnan for stimulating discussions and many useful suggestions which helped us to improve the paper. SS was partially supported by Matrics grant MTR/2017/000837.  The work of MV was supported by NSAF grant (No. U1930402).

\end{document}